\theoremstyle{plain}
\newtheorem{theorem}{Theorem}[section]
\newtheorem{corollary}[theorem]{Corollary}
\newtheorem{proposition}[theorem]{Proposition}
\newtheorem{lemma}[theorem]{Lemma}
\newtheorem{claim}[theorem]{Claim}
\newtheorem{question}[theorem]{Question}
\theoremstyle{remark}
\newtheorem{remark}[theorem]{Remark}
\begin{document}

\title[A universality property]{Orbit pseudometrics and a universality property of the Gromov-Hausdorff distance}
\author{Ond\v{r}ej Kurka}
\thanks{The author was supported by the GA\v{C}R project 20-22230L and RVO: 67985840.}
\address{Institute of Mathematics, Czech Academy of Sciences, \v{Z}itn\'a 25, 115 67 Prague~1, Czech Republic}
\email{kurka.ondrej@seznam.cz}
\keywords{Borel reduction, orbit pseudometric, Gromov-Hausdorff distance}
\subjclass[2020]{54H05}
\begin{abstract}
We consider the notion of Borel reducibility between pseudometrics on standard Borel spaces introduced and studied recently by C\'{u}th, Doucha and Kurka, as well as the notion of an orbit pseudometric, a continuous version of the notion of an orbit equivalence relation. It is well known that the relation of isometry of Polish metric spaces is bireducible with a universal orbit equivalence relation. We prove a version of this result for pseudometrics, showing that the Gromov-Hausdorff distance of Polish metric spaces is bireducible with a universal element in a certain class of orbit pseudometrics.
\end{abstract}
\maketitle

\section{Introduction and main results}

Let $ E, F $ be equivalence relations on Polish spaces $ X, Y $ (see Section~\ref{section:prelim} for more details on the terminology). We say that $ E $ is \emph{Borel reducible} to $ F $ if there exists a Borel mapping $ f : X \to Y $ (so-called \emph{reduction}) such that
$$ f(x) \, F \, f(x') \quad \Leftrightarrow \quad x \, E \, x' $$
for all $ x, x' \in X $. In some sense, this means that $ E $ is at most as complex as $ F $, or that the problem of deciding whether two objects are $ E $-equivalent can be reduced to the problem of deciding whether two objects are $ F $-equivalent.

The study of reducibility between definable equivalence relations, often called invariant descriptive set theory, is an active area of current research, see e.g.~\cite{gao}. Let us emphasize here the result of Clemens, Gao and Kechris \cite{clemens, gaokech} which states that the relation of isometry of Polish metric spaces is bireducible with a universal orbit equivalence relation. Here, an \emph{orbit equivalence relation} is the relation defined by $ x E^{X}_{G} y \Leftrightarrow \exists g \in G : g \cdot x = y $ for some Borel action of a Polish group $ G $ on a Polish space $ X $, and an orbit equivalence relation $ E^{X}_{G} $ is called \emph{universal} if any other orbit equivalence relation $ E^{Y}_{H} $ is Borel reducible to $ E^{X}_{G} $.

It is worth noting that the isometry of Polish metric spaces is Borel bireducible also with the linear isometry of separable Banach spaces \cite{melleraylinisom}, with the isomorphism of separable C*-algebras \cite{sabok} and with the homeomorphism of compact metric spaces \cite{zielinski}.

Recently, the notion of Borel reducibility has been investigated by C\'{u}th, Doucha and Kurka in the setting of pseudometrics on Polish spaces, see \cite{cdk1, cdk2}. This is a more general setting, since an equivalence relation $ E $ can be viewed as the pseudometric $ \varrho_{E} $ defined by $ \varrho_{E}(x, y) = 0 $ if $ xEy $ and $ \varrho_{E}(x, y) = 1 $ otherwise. Moreover, in \cite{cdk1}, it is shown that orbit equivalence relations have a natural generalization in the setting of pseudometrics, so-called orbit pseudometrics.

Let $ \varrho_{X} $ and $ \varrho_{Y} $ be pseudometrics on Polish spaces $ X $ and $ Y $. We say that $ \varrho_{X} $ is \emph{Borel-uniformly continuous (Borel-u.c.) reducible} to $ \varrho_{Y} $ if there exists a Borel mapping $ f : X \rightarrow Y $ such that, for every $ \varepsilon > 0 $, there are $ \delta_{X} > 0 $ and $ \delta_{Y} > 0 $ satisfying
$$ \varrho_{X}(x, x') < \delta_{X} \; \Rightarrow \; \varrho_{Y}(f(x), f(x')) < \varepsilon $$
and
$$ \varrho_{Y}(f(x), f(x')) < \delta_{Y} \; \Rightarrow \; \varrho_{X}(x, x') < \varepsilon $$
for all $ x, x' \in X $. This is a quantitative version of the Borel reducibility of the relation $ \varrho_{X}(x, x') = 0 $ to the relation $ \varrho_{Y}(y, y') = 0 $. Let us accentuate that pseudometrics in this work are allowed to attain $ \infty $ as a value.

The central object studied in \cite{cdk1, cdk2} is the Gromov-Hausdorff distance of Polish metric spaces, which, in some sense, measures how far are two spaces from being isometric (see Section~\ref{section:prelim} for the precise definition, as well as for the definitions of the Urysohn space $ \mathbb{U} $ and of the coding $ F(\mathbb{U}) $ of Polish metric spaces, enabling us to see the Gromov-Hausdorff distance as a pseudometric $ \varrho_{GH} $ on a Polish space). It is shown that the Gromov-Hausdorff distance is bireducible with several other distances, for instance the Lipschitz distance of Polish metric spaces or the Kadets and Banach-Mazur distances of separable Banach spaces. Also, it is bireducible with a simple looking orbit pseudometric, we recall this in Theorem~\ref{thmcdk}.

We now formulate the main result of this work. This is a continuous version of the result of Clemens, Gao and Kechris, in which the Gromov-Hausdorff distance takes over the role of the isometry relation.

\begin{theorem} \label{thmmain1}
Let $ G $ be a Polish group acting continuously on a Polish space $ X $. Let $ d $ be a lower semicontinuous pseudometric on $ X $ such that $ d(x, y) = d(gx, gy) $ for any $ x, y \in X $ and $ g \in G $. Moreover, let there be a system $ \Phi $ of continuous pseudometrics on $ X $ such that $ d = \sup_{\varphi \in \Phi} \varphi $.

Let $ \varrho_{G, d} $ be the corresponding orbit pseudometric, i.e., the pseudometric defined by
$$ \varrho_{G, d}(x, y) = \inf_{g \in G} d(gx, y). $$
Then $ \varrho_{G, d} $ is Borel-u.c.~reducible to the Gromov-Hausdorff distance $ \varrho_{GH} $.
\end{theorem}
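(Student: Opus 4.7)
The plan is to construct a Borel map $\Psi : X \to F(\mathbb{U})$ that sends each $x$ to a Polish metric space $M_x$ canonically built from $x$, the orbit $G\cdot x$, and the pseudometric $d$, and then to verify that $\varrho_{GH}(M_x,M_y)$ and $\varrho_{G,d}(x,y)$ control each other in the Borel-u.c.\ sense.

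First I would reduce to a countable setup. Fix a countable dense sequence $(g_n)_n$ in $G$ (say with $g_1=e$) and, using the hypothesis $d=\sup_{\varphi\in\Phi}\varphi$ together with the separability of $X$, extract a countable subfamily $(\varphi_k)_k\subseteq\Phi$ with $d=\sup_k\varphi_k$. Each $x\mapsto\varphi_k(g_n x,g_m x)$ is then continuous, so $x\mapsto d(g_n x,g_m x)$ is Borel. Fix in addition a compatible left-invariant metric $d_G$ on $G$ (Birkhoff--Kakutani). Associate to $x$ the pseudometric $\rho_x$ on $\mathbb N$ defined by
\[
\rho_x(n,m) := d(g_n x,g_m x)+d_G(g_n,g_m),
\]
pass to its metric quotient and completion $M_x$, and embed isometrically into $\mathbb U$; this yields a Borel map $\Psi:X\to F(\mathbb U)$. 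The purpose of the $d_G$-summand is to keep the parameterisation by $(g_n)$ visible inside $M_x$, so that the orbit position of $x$ (and not merely the isometric type of its orbit) is encoded.

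For the forward direction, suppose $\varrho_{G,d}(x,y)<\delta$ and pick $g_0\in G$ with $d(g_0 x,y)<\delta$. By $G$-invariance of $d$ and two triangle inequalities,
\[
|d(g_n g_0 x,g_m g_0 x)-d(g_n y,g_m y)|\le 2\delta
\]
for all $n,m$. Since right multiplication by $g_0$ is a $d_G$-homeomorphism of $G$, for each $n$ one can choose $\sigma(n)$ with $d_G(g_{\sigma(n)},g_n g_0)$ arbitrarily small. Combining the two estimates produces an $O(\delta)$-correspondence between countable dense subsets of $M_x$ and $M_y$, and hence $\varrho_{GH}(M_x,M_y)=O(\delta)$. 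A technical nuisance is that the $d_G$-approximation of $g_n g_0$ by $g_{\sigma(n)}$ only controls $d(g_{\sigma(n)} x,g_n g_0 x)$ through the $\liminf$ supplied by lower semicontinuity; this is where the representation $d=\sup_k\varphi_k$ is invoked, turning the required bound into a statement about finitely many continuous $\varphi_k$ that can be verified in a Borel manner.

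The main obstacle will be the reverse direction: inverting an abstract Gromov-Hausdorff near-isometry $\phi:M_x\to M_y$ into an honest $g\in G$ with $d(gx,y)$ small. The $d_G$-component of $\rho$ should force $\phi$ to act on the dense parameterisation $(g_n)$ almost as a left translation of $G$ by some element $h$; once $h$ is identified, again reading $d$ through the continuous approximants $\varphi_k$, the $d$-component of $\rho$ will yield $d(hx,y)\le\varepsilon$. The delicate part is locating $h$ with enough Borel regularity in $x$ and $y$ to deliver the required uniform-continuity estimate, and this is where the structural assumptions on $d$ and on the action are used in full strength.
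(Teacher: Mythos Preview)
Your construction has a fatal gap in the reverse direction: the space $M_x$ depends only on the restriction of $d$ to the orbit $G\cdot x$ (together with the fixed metric $d_G$ on $G$), not on how that orbit sits inside $X$. But $\varrho_{G,d}(x,y)=\inf_g d(gx,y)$ measures $d$-distances \emph{between} orbits, information your $M_x$ never records. Concretely, take $G=\mathbb Z/2\mathbb Z$ acting on a four-point discrete space $X=\{a,b,c,d\}$ by swapping $a\leftrightarrow b$ and $c\leftrightarrow d$, and let $d$ be any $G$-invariant metric with $d(a,b)=d(c,d)=1$ and $\min\{d(a,c),d(b,c)\}\ge 2$. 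Then $M_a$ and $M_c$ are two-point spaces with the same single nonzero distance $1+d_G(0,1)$, so $\varrho_{GH}(M_a,M_c)=0$, yet $\varrho_{G,d}(a,c)\ge 2$. No choice of $d_G$ can repair this, and your hope that ``the $d_G$-component should force $\phi$ to act almost as a left translation'' is simply irrelevant here: the correspondence is an honest isometry, it does come from a translation, and the conclusion still fails. (There is also a smaller slip: you need $d_G$ to be \emph{right}-invariant, since the relabelling $g\mapsto gg_0^{-1}$ is the one you want to be a $d_G$-isometry; and in the forward direction the approximation $g_{\sigma(n)}\to g_n g_0$ only controls $d(g_{\sigma(n)}x,\,\cdot\,)$ from below, by lower semicontinuity, whereas you need an upper bound.)

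The paper's construction addresses exactly this defect by encoding, for each $p$, the distances $\varphi_k(gp,x)$ to \emph{all} points $x\in X$, not just to points of the orbit. The base object is an extension $Z$ of $G\times X$ rather than of $G$ alone; Melleray's theorem (Theorem~\ref{thmmelleray}) is invoked so that every surjective isometry of $Z$ is forced to come from some $I_h:(g,x)\mapsto(gh,x)$. A further blow-up $Z\times\mathbb N$, with distances on the $k$-th copy scaled by $2^k$, is then used so that any Gromov--Hausdorff correspondence between $Y_p$ and $Y_q$ induces an \emph{exact} surjective isometry of $Z$ (Claim~\ref{cl6}), hence some $I_h$, from which one reads off $d(hq,p)\le 2r$. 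Both the Melleray rigidification and the scaling trick are essential, and your plan contains neither; in particular, even if you enlarged $M_x$ to see all of $X$, a bare group metric $d_G$ has in general many isometries that are not translations, so a near-isometry coming from a correspondence would not hand you a group element.
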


It is not clear if the theorem holds without the assumption of the existence of a system $ \Phi $ of continuous pseudometrics (this is extensively discussed in Section~\ref{section:comments}). Actually, we prove a version of the theorem with a more general system $ \Phi $ and a stronger conclusion, see Theorem~\ref{thmmain3}. The constructed reduction $ f : X \to F(\mathbb{U}) \setminus \{ \emptyset \} $ satisfies 
$$ \varrho_{GH}(f(p), f(q)) \leq \min \{ \varrho_{G, d}(p, q), 1 \} \leq 2 \varrho_{GH}(f(p), f(q)), \quad p, q \in X. $$
Thus, in the terminology of \cite{cdk1}, the reduction is Borel-Lipschitz on small distances.

Let us recall a remarkable problem posed by Ben Yaacov, Doucha, Nies and Tsankov \cite{bydnt}.
Let $ E_{GH} $ be the relation given by $ M E_{GH} N \Leftrightarrow \varrho_{GH}(M, N) = 0 $ for $ M, N \in F(\mathbb{U}) \setminus \{ \emptyset \} $. Then any orbit equivalence is Borel reducible to $ E_{GH} $, but it is not known if $ E_{GH} $ is Borel reducible to an orbit equivalence, see \cite[Question~8.6]{bydnt}. However, $ E_{GH} $ possesses some properties of orbit equivalences, as its equivalence classes are Borel, see \cite[Corollary~8.3]{bydnt} (or \cite[Corollary~52]{cdk1}), and the relation
$$ x E_{1} y \; \Leftrightarrow \; \big( \exists n \, \forall m \geq n : x(m) = y(m) \big), \quad x, y \in (2^\mathbb{N})^{\mathbb{N}}, $$
is not Borel reducible to it, see \cite[Theorem~24]{cdk1}. Therefore, Theorem~\ref{thmmain1} has the following consequence.

\begin{corollary} \label{corzerodist}
Let $ \varrho_{G, d} $ be as in Theorem~\ref{thmmain1}, and let $ E^{X}_{G, d} $ be the equivalence relation given by $ x E^{X}_{G, d} y \Leftrightarrow \varrho_{G, d}(x, y) = 0 $. Then $ E^{X}_{G, d} $ is Borel reducible to $ E_{GH} $, and thus
\begin{itemize}
\item the equivalence classes of $ E^{X}_{G, d} $ are Borel,
\item $ E_{1} $ is not Borel reducible to $ E^{X}_{G, d} $.
\end{itemize}
\end{corollary}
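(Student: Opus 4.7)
The plan is to obtain all three conclusions as direct consequences of Theorem~\ref{thmmain1}, combined with the properties of $E_{GH}$ recalled just before the corollary. First I would apply Theorem~\ref{thmmain1} to produce a Borel map $f : X \to F(\mathbb{U}) \setminus \{\emptyset\}$ which is a Borel-u.c.~reduction of $\varrho_{G,d}$ to $\varrho_{GH}$. The key observation is that any Borel-u.c.~reduction is automatically a Borel reduction of the associated zero-distance equivalence relations: if $\varrho_{G,d}(x,y) = 0$, then for every $\varepsilon > 0$ the hypothesis $\varrho_{G,d}(x,y) < \delta_X$ is vacuously satisfied, hence $\varrho_{GH}(f(x), f(y)) < \varepsilon$, i.e.\ $\varrho_{GH}(f(x), f(y)) = 0$; symmetrically, using the second implication in the definition of Borel-u.c.~reducibility, $\varrho_{GH}(f(x), f(y)) = 0$ forces $\varrho_{G,d}(x,y) = 0$. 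Thus $f$ witnesses $E^X_{G,d} \leq_B E_{GH}$.

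The two bulleted consequences then fall out of the cited properties of $E_{GH}$. For the first bullet, each equivalence class $[x]_{E^X_{G,d}}$ equals $f^{-1}([f(x)]_{E_{GH}})$; since the $E_{GH}$-class $[f(x)]_{E_{GH}}$ is Borel by \cite[Corollary~8.3]{bydnt} (or \cite[Corollary~52]{cdk1}) and $f$ is Borel, the preimage is Borel as well. For the second bullet I would argue by contradiction: if $E_1 \leq_B E^X_{G,d}$, then composing with $f$ and using transitivity of Borel reducibility would give $E_1 \leq_B E_{GH}$, contradicting \cite[Theorem~24]{cdk1}.

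There is no real obstacle here; the corollary is a formal consequence of Theorem~\ref{thmmain1} together with the two external facts about $E_{GH}$ explicitly recalled in the paragraph preceding the corollary. The only point deserving a line of justification is the passage from Borel-u.c.~reducibility of pseudometrics to Borel reducibility of the induced ``zero distance'' equivalence relations, and this is just an unpacking of quantifiers in the definition.
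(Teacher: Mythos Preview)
Your proposal is correct and matches the paper's intended argument; the paper gives no explicit proof of the corollary, simply presenting it as a direct consequence of Theorem~\ref{thmmain1} together with the two cited properties of $E_{GH}$, which is exactly what you unpack. One minor wording quibble: the hypothesis $\varrho_{G,d}(x,y) < \delta_X$ is not ``vacuously'' satisfied but genuinely satisfied (since $0 < \delta_X$); otherwise the reasoning is clean.
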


It turns out that our methods moreover provide a positive answer to Question~41 from \cite{cdk1}. Let us recall that Gao and Kechris \cite{gaokech} proved that the relation induced by the canonical action of the isometry group $ Iso(\mathbb{U}) $ on $ F(\mathbb{U}) $ is a universal orbit equivalence relation (and so it is Borel bireducible with the isometry relation of Polish metric spaces). Considering the Hausdorff distance $ \varrho_{H} $, we can obtain the following analogue for pseudometrics.

\begin{theorem} \label{thmmain2}
The Gromov-Hausdorff distance $ \varrho_{GH} $ is Borel-u.c.~bireducible with the orbit pseudometric $ \varrho_{Iso(\mathbb{U}), \varrho_{H}} $ on $ F(\mathbb{U}) \setminus \{ \emptyset \} $.
\end{theorem}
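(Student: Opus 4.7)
The plan is to establish the two directions of the bireducibility separately. For the easy direction, that $\varrho_{GH}$ is Borel-u.c.~reducible to $\varrho_{Iso(\mathbb{U}), \varrho_H}$, the identity map on $F(\mathbb{U}) \setminus \{\emptyset\}$ should suffice; in fact it yields the stronger statement $\varrho_{GH}(M, N) = \varrho_{Iso(\mathbb{U}), \varrho_H}(M, N)$ for all $M, N$. The inequality ``$\leq$'' is immediate, since $M$ and $gN$ already lie in the common ambient space $\mathbb{U}$ for any $g \in Iso(\mathbb{U})$. For the reverse, given isometric embeddings $f_M \colon M \to Z$ and $f_N \colon N \to Z$ of the two spaces into a separable metric space $Z$, I would first embed the closure of $f_M(M) \cup f_N(N)$ isometrically into $\mathbb{U}$, producing isometric copies $M', N' \subseteq \mathbb{U}$ with $\varrho_H(M', N') = \varrho_H^Z(f_M(M), f_N(N))$, and then invoke the ultrahomogeneity of $\mathbb{U}$ on closed separable subspaces to find $g_1, g_2 \in Iso(\mathbb{U})$ realizing $g_1 M = M'$ and $g_2 N = N'$; the quantity $\varrho_H(M, g_1^{-1} g_2 N) = \varrho_H(M', N')$ then bounds $\varrho_{Iso(\mathbb{U}), \varrho_H}(M, N)$ from above, as required.

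For the harder reverse direction, the plan is to apply Theorem~\ref{thmmain1} with $G = Iso(\mathbb{U})$, $X = F(\mathbb{U}) \setminus \{\emptyset\}$ (equipped with the Wijsman topology, which is Polish, carries the Effros Borel structure, and renders the canonical $Iso(\mathbb{U})$-action continuous), and $d = \varrho_H$. Invariance of $\varrho_H$ under $Iso(\mathbb{U})$ is immediate, so the essential point is to exhibit a system $\Phi$ of continuous pseudometrics whose supremum is $\varrho_H$. Fixing a countable dense subset $D \subseteq \mathbb{U}$, I would take
$$ \Phi = \{\varphi_x : x \in D\}, \qquad \varphi_x(M, N) := |d_\mathbb{U}(x, M) - d_\mathbb{U}(x, N)|, $$
where $d_\mathbb{U}$ denotes the Urysohn metric. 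Each $\varphi_x$ is plainly a pseudometric, and the map $M \mapsto d_\mathbb{U}(x, M)$ is continuous by the very definition of the Wijsman topology, so each $\varphi_x$ is continuous on $X$. The identity $\sup_{x \in D} \varphi_x = \varrho_H$ should then follow from the standard formula $\varrho_H(M, N) = \sup_{x \in \mathbb{U}} |d_\mathbb{U}(x, M) - d_\mathbb{U}(x, N)|$ together with the $2$-Lipschitz dependence of $x \mapsto |d_\mathbb{U}(x, M) - d_\mathbb{U}(x, N)|$, which lets one replace the supremum over all of $\mathbb{U}$ by one over the dense set $D$; as a by-product, lower semicontinuity of $\varrho_H$ on $X$ comes for free. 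With all hypotheses of Theorem~\ref{thmmain1} in place, Borel-u.c.~reducibility of $\varrho_{Iso(\mathbb{U}), \varrho_H}$ to $\varrho_{GH}$ follows at once.

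The main obstacle in the whole argument is topological bookkeeping: confirming that the Wijsman topology on $F(\mathbb{U}) \setminus \{\emptyset\}$ is Polish and induces precisely the Effros Borel structure used in the paper's coding of $F(\mathbb{U})$, and that the canonical action of $Iso(\mathbb{U})$ is continuous with respect to it. These are standard facts about hyperspaces of Polish metric spaces, but they need to be stated carefully before Theorem~\ref{thmmain1} can be invoked. After that, the verifications of invariance, lower semicontinuity and the supremum identity reduce to short, routine checks.
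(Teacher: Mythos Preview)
Your ``hard direction'' (reducing $\varrho_{Iso(\mathbb{U}),\varrho_H}$ to $\varrho_{GH}$) is correct and is exactly the paper's argument: Lemma~\ref{lemmawijsman} supplies the continuous action and the system $\Phi$, and Theorem~\ref{thmmain1} finishes.

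The gap is in your ``easy direction''. The claimed equality $\varrho_{GH}=\varrho_{Iso(\mathbb{U}),\varrho_H}$ is false, because the Urysohn space is \emph{not} homogeneous for arbitrary closed subsets: an isometry between two closed subsets of $\mathbb{U}$ need not extend to an element of $Iso(\mathbb{U})$. Concretely, take $M=\mathbb{U}$ and let $N\subsetneq\mathbb{U}$ be a proper closed subset isometric to $\mathbb{U}$ (such $N$ exist: embed the Polish space $\mathbb{U}\cup\{p\}$, with $d(p,u)=d(u,u_0)+1$, isometrically into $\mathbb{U}$ and take the image of $\mathbb{U}$). Then $\varrho_{GH}(M,N)=0$, but for every $g\in Iso(\mathbb{U})$ one has $g(M)=\mathbb{U}$, so $\varrho_{Iso(\mathbb{U}),\varrho_H}(M,N)=\varrho_H(\mathbb{U},N)=\sup_{x\in\mathbb{U}}d(x,N)\geq 1$. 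Hence the identity map is not a Borel-u.c.\ reduction of $\varrho_{GH}$ to $\varrho_{Iso(\mathbb{U}),\varrho_H}$, and your appeal to ``ultrahomogeneity on closed separable subspaces'' is the step that fails. (Ultrahomogeneity of $\mathbb{U}$ holds for finite, and by a back-and-forth argument for compact, subsets---not for general closed ones.)

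This is precisely why the paper does not use the identity for that direction. Instead it routes through Theorem~\ref{thmcdk}: $\varrho_{GH}$ is bireducible with the orbit pseudometric $\varrho_{S_\infty,\sigma}$ on $[1/2,1]^{[\mathbb{N}]^2}$, and Theorem~\ref{thmmain3} applied to that orbit pseudometric produces a Borel map $p\mapsto Y_p$ into $F(\mathbb{U})\setminus\{\emptyset\}$ satisfying $\varrho_{Iso(\mathbb{U}),\varrho_H}(Y_p,Y_q)\leq\varrho_{S_\infty,\sigma}(p,q)\leq 2\varrho_{GH}(Y_p,Y_q)\leq 2\varrho_{Iso(\mathbb{U}),\varrho_H}(Y_p,Y_q)$. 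Composing gives the missing reduction. The essential ingredient you are lacking is the Kat\v{e}tov-type embedding (Theorem~\ref{thmkatetov}) of the auxiliary space $Y$ into $\mathbb{U}$, which guarantees that the relevant isometries of $Y$ \emph{do} extend to $Iso(\mathbb{U})$; this is what makes $\varrho_{Iso(\mathbb{U}),\varrho_H}$ controllable from above, something the raw homogeneity of $\mathbb{U}$ cannot deliver.
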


This theorem is also a consequence of Theorem~\ref{thmmain3} which provides a reduction of an orbit pseudometric to both $ \varrho_{GH} $ and $ \varrho_{Iso(\mathbb{U}), \varrho_{H}} $. Similarly as in Theorem~\ref{thmmain1}, it is possible to construct both reductions in Theorem~\ref{thmmain2} to be Borel-Lipschitz on small distances.

Finally, we prove an analogue of Theorem~\ref{thmmain2} in the setting of Banach spaces, see Theorem~\ref{thmmain4}. The proof is more or less based on known methods, but it requires some additional work, and we postpone it to an appendix.

\section{Preliminaries} \label{section:prelim}

Let $ \varrho_{H}(A, B) $ denote the Hausdorff distance of two non-empty subsets $ A $ and $ B $ of a metric space. The \emph{Gromov-Hausdorff distance} of non-empty metric spaces $ M $ and $ N $ is defined by
$$ \varrho_{GH} (M, N) = \inf_{\substack{\textnormal{$ Z $ metric space}\\ i_{M} : M \hookrightarrow Z\\ i_{N}: N \hookrightarrow Z}} \varrho_{H} \big( i_{M}(M), i_{N}(N) \big), $$
where the symbol $ \hookrightarrow $ denotes an isometric embedding. Actually, in this work, both the Hausdorff distance $ \varrho_{H} $ and the Gromov-Hausdorff distance $ \varrho_{GH} $ are considered mainly between elements of the space $ F(\mathbb{U}) \setminus \{ \emptyset \} $ defined below, and both distances are regarded as pseudometrics on this space.

A binary relation $ \mathcal{R} \subseteq X \times Y $ is called a \emph{correspondence} between $ X $ and $ Y $ if $ \forall x \in X \exists y \in Y : x \mathcal{R} y $ and $ \forall y \in Y \exists x \in X : x \mathcal{R} y $. Let $ (M, \delta_{M}) $ and $ (N, \delta_{N}) $ be metric spaces and let $ r > 0 $. It is well known and easy to show that if $ \varrho_{GH} (M, N) < r $, then there exists a correspondence $ \mathcal{R} $ between $ M $ and $ N $ such that
$$ m \mathcal{R} n \; \& \; m' \mathcal{R} n' \quad \Rightarrow \quad | \delta_{M}(m, m') - \delta_{N}(n, n')| < 2r. $$

A \emph{Polish space (topology)} means a separable completely metrizable topological space (topology), and a \emph{Polish metric space} means a separable complete metric space. A \emph{Polish group} is a topological group whose topology is Polish. By an action of a group $ G $ on a set $ X $ we mean a mapping $ (g, x) \in G \times X \mapsto g \cdot x \in X $ satisfying $ 1_{G} \cdot x = x $ and $ (gh) \cdot x = g \cdot (h \cdot x) $ for all $ g, h \in G $ and $ x \in X $.

The \emph{Urysohn space} is defined as the (up to isometry) only Polish metric space $ \mathbb{U} $ with the property that for any finite metric space $ A $ and any isometric embedding $ i : B \to \mathbb{U} $, where $ B \subseteq A $, there exists an isometric embedding $ \widetilde{i} : A \to \mathbb{U} $ extending $ i $. It is well known that the Urysohn space contains an isometric copy of every Polish metric space, in fact, the following result holds.

\begin{theorem}[Kat\v{e}tov \cite{katetov}] \label{thmkatetov}
Let $ X $ be a Polish metric space. Then there exists an isometric embedding $ i : X \to \mathbb{U} $ such that any surjective isometry on $ i(X) $ can be extended to a surjective isometry on $ \mathbb{U} $.
\end{theorem}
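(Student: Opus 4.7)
The plan is to build the embedding via Kat\v{e}tov's tower construction of $\mathbb{U}$ over $X$, which is functorial with respect to isometries. Recall that a \emph{Kat\v{e}tov function} on a metric space $(Y, d_{Y})$ is a map $f : Y \to [0, \infty)$ satisfying $|f(y) - f(y')| \leq d_{Y}(y, y') \leq f(y) + f(y')$; such an $f$ encodes a formal one-point metric extension of $Y$ in which the new point has distance $f(y)$ from each $y \in Y$. The set $E(Y)$ of all Kat\v{e}tov functions equipped with the supremum distance becomes a metric space into which $Y$ isometrically embeds via $y \mapsto d_{Y}(\cdot, y)$. Let $E_\omega(Y)$ denote the subset of those $f$ which are \emph{controlled by a finite set}: there is a finite $S \subseteq Y$ with $f(y) = \inf_{s \in S}(f(s) + d_{Y}(s, y))$ for every $y$. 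When $Y$ is separable, so is $E_\omega(Y)$, because such an $f$ is determined by the finite data $(S, f|_{S})$ up to a countable dense choice of parameters.

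Next, I would set $X_0 := X$, inductively define $X_{n+1} := E_\omega(X_n)$ as a separable extension of $X_n$, put $X_\infty := \bigcup_n X_n$, and let $\mathbb{U}'$ denote its completion. A routine verification shows that any Kat\v{e}tov function $g$ on a finite $F \subseteq X_n$ extends (still as a Kat\v{e}tov function) to an element of $E_\omega(X_n) = X_{n+1}$ controlled by $F$; this gives $\mathbb{U}'$ the defining extension property of the Urysohn space, and by uniqueness $\mathbb{U}'$ is canonically isometric to $\mathbb{U}$. This yields the required embedding $i : X = X_0 \hookrightarrow \mathbb{U}$. The key functorial property is now visible: any surjective isometry $\sigma$ of $i(X) = X_0$ induces a surjective isometry $\sigma_{n+1}$ of $X_{n+1}$ by the rule
$$ (\sigma_{n+1} f)(y) := f(\sigma_n^{-1}(y)), \qquad f \in E_\omega(X_n), \; y \in X_n, $$
which is a sup-metric isometry preserving $E_\omega$, restricting to $\sigma_n$ on $X_n \hookrightarrow X_{n+1}$, and bijective precisely because $\sigma_n$ is. The family $(\sigma_n)$ then fits together into a surjective isometry of $X_\infty$, which by density extends uniquely to a surjective isometry of $\mathbb{U}$ restricting to $\sigma$ on $i(X)$.

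The main obstacle is keeping the bookkeeping coherent: one must verify that the inclusions $X_n \hookrightarrow X_{n+1}$ (via $y \mapsto d_{X_n}(\cdot, y)$) intertwine $\sigma_n$ with $\sigma_{n+1}$, so that the lifts can be glued. This is immediate from the identity $d_{X_n}(\sigma_n^{-1}(y), z) = d_{X_n}(y, \sigma_n(z))$ and the definition of $\sigma_{n+1}$, i.e., it is functoriality of the assignment $Y \mapsto E_\omega(Y)$ applied to isometric bijections. The only other subtlety is to check that the limit has the full Urysohn extension property for arbitrary finite metric extensions (not merely one-point ones), but this follows by iterating the one-point case $|A \setminus B|$ times, each new point being adjoined at some finite level of the tower. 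Once these pieces are in place, the embedding $i$ constructed via the Kat\v{e}tov tower has the claimed extension property.
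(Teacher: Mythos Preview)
Your proposal is correct and is precisely Kat\v{e}tov's original tower construction; the paper does not supply a proof of this theorem at all, treating it as a known result cited from \cite{katetov}. There is nothing to compare on the paper's side, and your outline covers the essential points (functoriality of $Y\mapsto E_\omega(Y)$ under bijective isometries, coherence of the lifts along the inclusions, and the resulting glued isometry on the completion).
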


For a Polish metric space $ (X, \delta_{X}) $, we define
$$ F(X) = \big\{ F \subseteq X : \textnormal{$ F $ is closed} \big\}, $$
and we equip $ F(X) \setminus \{ \emptyset \} $ with the \emph{Wijsman topology}, defined as the coarsest topology for which the function
$$ F \mapsto \delta_{X}(x, F), $$
is continuous for each $ x \in X $. If we moreover add $ \emptyset $ as an isolated point, then $ F(X) $ is a Polish space whose Borel $ \sigma $-algebra is the \emph{Effros Borel structure} of $ X $, defined as the $ \sigma $-algebra generated by the sets $ \{ F \in F(X) : F \cap U \neq \emptyset \} $, $ U \subseteq X $ open, see e.g.~\cite{beer}. Let us note that if $ X $ is universal (in the sense that it contains an isometric copy of every Polish metric space), then $ F(X) $ is a coding of all Polish metric spaces up to isometry. For this purpose, we will employ the Urysohn space and the coding $ F(\mathbb{U}) $. Although this is not the same coding as the coding considered in the preceding works \cite{cdk1} and \cite{cdk2}, it makes no important difference.

For a Polish metric space $ X $, we denote by $ Iso(X) $ the group of all surjective isometries on $ X $ with the topology of pointwise convergence. This is a Polish group, and by its canonical action on $ F(X) $ we mean the action $ I \cdot F = I(F) $. Let us prove a simple lemma.

\begin{lemma} \label{lemmawijsman}
Let $ F(X) \setminus \{ \emptyset \} $ be equipped with the Wijsman topology. Then the canonical action of $ Iso(X) $ on $ F(X) \setminus \{ \emptyset \} $ is continuous. Moreover, the Hausdorff distance $ \varrho_{H} $ is lower semicontinuous on $ F(X) \setminus \{ \emptyset \} $ and there is a system $ \Phi $ of continuous pseudometrics such that $ \varrho_{H} = \sup_{\varphi \in \Phi} \varphi $.
\end{lemma}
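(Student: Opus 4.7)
My plan is to prove the three claims in order, all essentially driven by a single identity for $ \varrho_{H} $.

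For the continuity of the canonical action, I would first observe that inversion $ I \mapsto I^{-1} $ is continuous on $ Iso(X) $ under the topology of pointwise convergence: if $ I_n \to I $ pointwise and $ y := I^{-1}(x) $, then $ \delta_{X}(I_n^{-1}(x), y) = \delta_{X}(x, I_n(y)) \to \delta_{X}(x, I(y)) = 0 $, because each $ I_n $ is an isometry. The key identity is then $ \delta_{X}(x, I(F)) = \delta_{X}(I^{-1}(x), F) $. It remains to check that the map $ (y, F) \mapsto \delta_{X}(y, F) $ is jointly continuous on $ X \times (F(X) \setminus \{ \emptyset \}) $, which is immediate from the Wijsman definition combined with the uniform $ 1 $-Lipschitz bound $ |\delta_{X}(y, F) - \delta_{X}(y', F)| \leq \delta_{X}(y, y') $. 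Composing these pieces gives continuity of $ (I, F) \mapsto I(F) $.

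The remaining two claims will both follow from the formula
$$ \varrho_{H}(A, B) = \sup_{x \in X} |\delta_{X}(x, A) - \delta_{X}(x, B)|, $$
which I would establish by showing the one-sided identity $ \sup_{a \in A}\delta_{X}(a, B) = \sup_{x \in X} \big( \delta_{X}(x, B) - \delta_{X}(x, A) \big) $. The inequality ``$ \leq $'' is obtained by taking $ x = a \in A $. For ``$ \geq $'', given $ x \in X $ and $ \varepsilon > 0 $, I would choose $ a \in A $ with $ \delta_{X}(x, a) < \delta_{X}(x, A) + \varepsilon $ and use $ \delta_{X}(x, B) \leq \delta_{X}(x, a) + \delta_{X}(a, B) $. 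The same argument on the other component yields the displayed formula.

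Given this formula, for each $ x \in X $ the function
$$ \varphi_{x}(A, B) := |\delta_{X}(x, A) - \delta_{X}(x, B)| $$
is a pseudometric on $ F(X) \setminus \{ \emptyset \} $ (the triangle inequality is inherited from $ | \cdot | $), and it is continuous in $ (A, B) $ by the very definition of the Wijsman topology. Setting $ \Phi := \{ \varphi_{x} : x \in X \} $ then gives $ \varrho_{H} = \sup_{\varphi \in \Phi} \varphi $ and simultaneously exhibits $ \varrho_{H} $ as a pointwise supremum of continuous functions, hence lower semicontinuous. I do not expect any substantive obstacle; the only minor point to watch is that $ \varrho_{H} $ may take the value $ +\infty $, so the supremum identity is to be read in $ [0, +\infty] $, but nothing in the argument is sensitive to this.
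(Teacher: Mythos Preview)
Your proposal is correct and follows essentially the same approach as the paper: the paper's proof of continuity uses the same identity $\delta_X(x, I(F)) = \delta_X(I^{-1}(x), F)$ together with the $1$-Lipschitz dependence on the point and the Wijsman continuity in $F$ (implicitly invoking continuity of inversion, which you make explicit), and for the ``moreover'' part the paper simply records the identity $\varrho_H(A,B) = \sup_{x \in X} |\delta_X(x,A) - \delta_X(x,B)|$, which is exactly the formula you prove and exploit via the family $\Phi = \{\varphi_x : x \in X\}$.
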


\begin{proof}
Assuming $ I_{n} \to I $ and $ F_{n} \to F $, we show that $ I_{n}(F_{n}) \to I(F) $. Given $ x \in X $, we need to check that $ \delta_{X}(x, I_{n}(F_{n})) \to \delta_{X}(x, I(F)) $. We have
\begin{align*}
|\delta_{X}(x & , I_{n}(F_{n})) - \delta_{X}(x, I(F))| = |\delta_{X}(I_{n}^{-1}(x), F_{n}) - \delta_{X}(I^{-1}(x), F)| \\
& \leq |\delta_{X}(I_{n}^{-1}(x), F_{n}) - \delta_{X}(I^{-1}(x), F_{n})| + |\delta_{X}(I^{-1}(x), F_{n}) - \delta_{X}(I^{-1}(x), F)| \\
& \leq \delta_{X}(I_{n}^{-1}(x), I^{-1}(x)) + |\delta_{X}(I^{-1}(x), F_{n}) - \delta_{X}(I^{-1}(x), F)|,
\end{align*}
and it remains to note that $ \delta_{X}(I_{n}^{-1}(x), I^{-1}(x)) \to 0 $ since $ I_{n} \to I $ and $ \delta_{X}(I^{-1}(x), F_{n}) \to \delta_{X}(I^{-1}(x), F) $ since $ F_{n} \to F $.

Concerning the moreover part, it is sufficient to notice that
$$ \varrho_{H}(A, B) = \sup_{x \in X} |\delta_{X}(x, A) - \delta_{X}(x, B)| $$
for all $ A, B \in F(X) \setminus \{ \emptyset \} $.
\end{proof}

Let us consider the action of the group $ S_{\infty} $ of all permutations of $ \mathbb{N} $ on the space $ [1/2, 1]^{[\mathbb{N}]^{2}} $ given by
$$ (\pi \cdot x)(m, n) = x \big( \pi^{-1}(m), \pi^{-1}(n) \big), \quad \{ m, n \} \in [\mathbb{N}]^{2}, $$
for $ \pi \in S_{\infty}, x \in [1/2, 1]^{[\mathbb{N}]^{2}} $, and let a pseudometric $ \sigma $ on $ [1/2, 1]^{[\mathbb{N}]^{2}} $ be defined by
$$ \sigma(x, y) = \sup_{\{ m, n \} \in [\mathbb{N}]^{2}} \big| x(m, n) - y(m, n) \big| $$
for $ x, y \in [1/2, 1]^{[\mathbb{N}]^{2}} $. Let us remark that the points of $ [1/2, 1]^{[\mathbb{N}]^{2}} $ represent the metrics on $ \mathbb{N} $ with values in $ \{ 0 \} \cup [1/2, 1] $. We will apply the following result from the preceding work \cite{cdk1}, for more details, see \cite[Theorem~11]{cdk1} and the proof of \cite[Theorem~26]{cdk1}.

\begin{theorem}[\cite{cdk1}] \label{thmcdk}
The Gromov-Hausdorff distance $ \varrho_{GH} $ is Borel-u.c.~bireducible with the orbit pseudometric $ \varrho_{S_{\infty}, \sigma} $ on $ [1/2, 1]^{[\mathbb{N}]^{2}} $.
\end{theorem}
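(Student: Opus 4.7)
The plan is to establish both Borel-u.c.~reductions separately. Both maps will be explicit enough that the uniform-continuity estimates reduce to controlling how ``correspondences'' and ``permutations'' translate into one another under the affine rescaling $ t \mapsto \tfrac12 + \tfrac12 \min(t, 1) $.

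For the reduction from $ \varrho_{S_\infty, \sigma} $ to $ \varrho_{GH} $, I note that each $ x \in [1/2, 1]^{[\mathbb{N}]^{2}} $ defines a metric on $ \mathbb{N} $ via $ d_{x}(m, n) = x(\{m, n\}) $ for $ m \neq n $: the triangle inequality is automatic because $ d_{x}(m, p) \leq 1 \leq \tfrac12 + \tfrac12 \leq d_{x}(m, n) + d_{x}(n, p) $. Using Theorem~\ref{thmkatetov} together with standard Borel-selection in $ F(\mathbb{U}) $, I would define a Borel map $ f : [1/2, 1]^{[\mathbb{N}]^{2}} \to F(\mathbb{U}) \setminus \{ \emptyset \} $ sending $ x $ to (a code of) an isometric copy of the completion of $ (\mathbb{N}, d_{x}) $ inside $ \mathbb{U} $. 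The forward estimate is direct: if $ \pi \in S_{\infty} $ witnesses $ \sigma(\pi \cdot x, y) < \varepsilon $, then the induced bijection $ \mathbb{N} \to \mathbb{N} $ is an $ \varepsilon $-isometry between $ (\mathbb{N}, d_{x}) $ and $ (\mathbb{N}, d_{y}) $, so $ \varrho_{GH}(f(x), f(y)) \leq \varepsilon $. The converse exploits the $ 1/2 $-gap in the metric: if $ \varrho_{GH}(f(x), f(y)) < \varepsilon < \tfrac14 $, any correspondence $ \mathcal{R} $ realizing this has distortion $ < 2\varepsilon < \tfrac12 $, but since distinct points of $ (\mathbb{N}, d_{x}) $ and of $ (\mathbb{N}, d_{y}) $ are at distance $ \geq 1/2 $, $ \mathcal{R} $ must be single-valued in both directions, hence an honest permutation of $ \mathbb{N} $ witnessing $ \varrho_{S_{\infty}, \sigma}(x, y) \leq 2\varepsilon $.

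For the reduction from $ \varrho_{GH} $ to $ \varrho_{S_\infty, \sigma} $, I would Borel-select for each $ F \in F(\mathbb{U}) \setminus \{ \emptyset \} $ a sequence $ (f_{n}(F))_{n} $ dense in $ F $, using Kuratowski--Ryll-Nardzewski to pick approximate nearest points of $ F $ to a fixed countable dense subset of $ \mathbb{U} $, arranged so that each approximant is repeated infinitely often (this absorbs the case of finite $ F $ and leaves room for the back-and-forth below). Define $ g(F) \in [1/2, 1]^{[\mathbb{N}]^{2}} $ by
$$ g(F)(m, n) = \tfrac12 + \tfrac12 \min \big( d_{\mathbb{U}}(f_{m}(F), f_{n}(F)), 1 \big). $$
This is Borel. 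The direction ``$ \varrho_{S_\infty, \sigma}(g(F), g(G)) $ small implies $ \varrho_{GH}(F, G) $ small'' is essentially immediate, since a permutation matching the two enumerated dense sequences yields an approximate bijective isometry between those dense subsets and hence controls $ \varrho_{GH} $ after inverting the rescaling. For the other direction one starts from a correspondence $ \mathcal{R} $ between $ F $ and $ G $ of distortion $ < 2\varepsilon $ (existing by the discussion in Section~\ref{section:prelim}) and builds a permutation $ \pi $ of $ \mathbb{N} $ such that $ (f_{\pi^{-1}(n)}(F), f_{n}(G)) $ lies ``within $ \mathcal{R} $'' up to a density error; this forces $ \sigma(\pi \cdot g(F), g(G)) $ to be small.

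The main obstacle is exactly this last step. The correspondence $ \mathcal{R} $ is multi-valued, so one cannot simply read off a bijection, and one has to run a back-and-forth enumeration: alternately match each $ f_{n}(G) $ to some as yet unused $ f_{\pi^{-1}(n)}(F) \in \mathcal{R}^{-1}(\text{neighborhood of } f_{n}(G)) $ and vice versa, exploiting the arranged infinite multiplicity of the selected sequences to reroute around collisions (in particular when $ F $ or $ G $ is finite). Once this permutation is produced with distortion controlled linearly in $ \varepsilon $ and the density scale of the sequences, the truncation-and-rescaling $ t \mapsto \tfrac12 + \tfrac12 \min(t, 1) $, which is $ \tfrac12 $-Lipschitz and preserves vanishing on small distances, propagates the estimates to the required Borel-u.c.~bounds in both directions.
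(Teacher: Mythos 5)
The paper does not prove Theorem~\ref{thmcdk} at all; it is cited from \cite{cdk1} (via \cite[Theorem~11]{cdk1} and the proof of \cite[Theorem~26]{cdk1}), so there is no in-text proof to compare your attempt against. Evaluating the attempt on its own terms: your first direction (reducing $\varrho_{S_\infty,\sigma}$ to $\varrho_{GH}$ via $x \mapsto (\mathbb{N}, d_x) \hookrightarrow \mathbb{U}$) is sound, and the ``$1/2$-gap forces the correspondence to be a bijection'' argument is the standard and correct way to go. The Borel-ness of the embedding into $\mathbb{U}$ deserves a real construction rather than a gesture at Theorem~\ref{thmkatetov} (which is about extending isometries, not selecting embeddings), but this is a fixable nuisance, not a gap.

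The second direction is where the proposal breaks. The map $g(F)(m,n) = \tfrac12 + \tfrac12 \min(d_{\mathbb{U}}(f_m(F), f_n(F)), 1)$ truncates all distances above $1$ to a single value, and this destroys exactly the information that Borel-u.c.~reducibility must preserve. Concretely, take $F$ a two-point set at distance $10$ and $G$ a two-point set at distance $20$ in $\mathbb{U}$. Then $\varrho_{GH}(F,G) = 5$, but both $g(F)$ and $g(G)$ are the constant function $1$ on $[\mathbb{N}]^{2}$ (for any choice of enumerations, since all nonzero truncated distances equal $1$), so $\varrho_{S_\infty,\sigma}(g(F), g(G)) = 0$. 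This violates the implication $\varrho_{S_\infty,\sigma}(g(F), g(G)) < \delta_Y \Rightarrow \varrho_{GH}(F,G) < \varepsilon$ for every $\varepsilon < 5$, so $g$ is not a Borel-u.c.~reduction, regardless of how the back-and-forth at the end is carried out. The missing idea is a genuine normalization step: one must first show that $\varrho_{GH}$ on all Polish spaces is Borel-u.c.~bireducible with $\varrho_{GH}$ restricted to spaces whose metric takes values in (something like) $\{0\} \cup [1/2,1]$, and this cannot be done by pointwise truncation of the metric, because truncation is not compatible with GH distance. That normalization is the actual content of the cited lemmas in \cite{cdk1}, and it is the part your proposal silently assumes away.
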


Let us recall one more result which will play an important role in our construction.

\begin{theorem}[Melleray \cite{melleray}] \label{thmmelleray}
Let $ X $ be a Polish metric space of diameter at most $ 1 $ and let $ G $ be a closed subgroup of $ Iso(X) $. Then there exists an extension $ Y $ of $ X $ such that
\begin{itemize}
\item $ Y $ is a Polish metric space,
\item any member of $ G $ can be extended in a unique way to a surjective isometry on $ Y $,
\item any surjective isometry on $ Y $ is an extension of a member of $ G $.
\end{itemize}
\end{theorem}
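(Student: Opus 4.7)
The plan is to enlarge $X$ by gluing on rigid labels that record the $G$-orbit of each point, engineered so that every $g\in G$ extends trivially to the labels, while every isometry of $Y$ is forced to respect the orbit decomposition and hence comes from $G$. It is convenient first to assume, after replacing $d_X$ by $\min(d_X,1)$ and a harmless rescaling, that distinct points of $X$ lie at distances in $[\varepsilon,1]$ for some $\varepsilon>0$; this keeps triangle inequalities easy to verify when attaching new points at a prescribed distance from $X$. I would then form the orbit pseudometric $\bar d(Gx,Gy)=\inf_{g\in G}d_X(x,gy)$, which descends to a separable metric on the quotient, and let $Q$ denote its metric completion.

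Next, set $Y:=X\sqcup Q$ with
\[
 d_Y(x,x')=d_X(x,x'),\qquad d_Y(q,q')=\bar d(q,q'),\qquad d_Y(x,q)=C+\bar d([x],q),
\]
where $C$ is chosen large enough for the triangle inequality and to separate $X$ from $Q$, say $C=2$. Then $Y$ is Polish, each $g\in G$ extends uniquely to $Y$ by acting as $g$ on $X$ and trivially on $Q$ (the formula for $d_Y(x,q)$ is $G$-invariant since $[gx]=[x]$), and by the distance gap $C$ any surjective isometry $\phi$ of $Y$ preserves both summands; after arranging that $X$ and $Q$ are not isometric (easy to ensure by a small further decoration of $Q$), one gets $\phi(X)=X$.

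The crucial and most delicate point is to rule out the possibility that $\phi|_X$ is an isometry of $X$ lying outside $G$. The naive construction sketched above only forces $\phi|_X$ into the normaliser $N_{\mathrm{Iso}(X)}(G)$, because $d_Y(x,q)=C+\bar d([x],q)$ sees only the orbit of $x$ and nothing finer. To close this gap one must decorate $Q$ with additional rigid Polish data attached in a $G$-equivariant fashion to each orbit but breaking every other symmetry. This can be done in a Kat\v{e}tov-style manner: pick a countable dense subgroup $G_0\leq G$ and a countable $G_0$-invariant dense subset $X_0\subseteq X$, then inductively amalgamate finite metric decorations whose isomorphism type encodes the $G_0$-orbit combinatorics exactly, and pass to the completion. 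The main obstacle is precisely the design of this decoration so that it remains compatible with both the Polish topology on $X$ and the continuous $G$-action, crucially invoking the closedness of $G$ in $\mathrm{Iso}(X)$ so that the limit identifications forced by the extended metric correspond to genuine elements of $G$ rather than to accumulation points living in some strictly larger subgroup. Uniqueness of the extension of each $g\in G$ to $Y$ then falls out, since the action on $X$ is prescribed, the action on $Q$ is forced to be trivial by $G$-invariance of the attachment, and the decoration is rigidly pinned to orbits.
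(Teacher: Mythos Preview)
Your proposal is not a proof but an outline in which the decisive step is explicitly left undone. You correctly observe that attaching the orbit quotient $Q$ only constrains an isometry $\phi$ of $Y$ to permute the $G$-orbits of $X$, which is far weaker than forcing $\phi|_X\in G$; you then write that ``to close this gap one must decorate $Q$ with additional rigid Polish data'' and describe this only as an inductive amalgamation that ``encodes the $G_0$-orbit combinatorics exactly''. That decoration is the entire content of the theorem, and your proposal supplies no construction of it. To see how completely the naive part can fail, take $X$ to be the circle with arc-length metric (scaled to diameter $1$) and $G$ its rotation subgroup: then $G$ is a proper closed subgroup of $\mathrm{Iso}(X)$, every $G$-orbit is all of $X$, $\bar d\equiv 0$, $Q$ is a single point, and your $Y=X\sqcup\{*\}$ has isometry group all of $\mathrm{Iso}(X)$, not $G$. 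No decoration of a one-point $Q$ can repair this; whatever rigidifying data you attach must encode the $G$-action itself, not merely its orbit partition, and you have not said how.

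There is also an error at the outset: one cannot arrange by ``harmless rescaling'' that distinct points of $X$ have distance bounded below by some $\varepsilon>0$. Any non-discrete Polish metric space (for instance $X=[0,1]$) has pairs of distinct points at arbitrarily small distance, and imposing a positive lower bound changes the topology, so the resulting $Y$ would not extend the given $X$.

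For comparison, the paper does not give a self-contained proof either; it explains that the statement follows from Melleray's construction in \cite{melleray} (the proof of Theorem~1.1 there) with two minor adjustments. That construction is a genuine Kat\v{e}tov-type tower rather than an orbit-space attachment, and the closedness of $G$ in $\mathrm{Iso}(X)$ enters concretely in a way your last sentence only alludes to.
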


Since this theorem is not stated explicitly in \cite{melleray}, it is necessary to give an explanation. Theorem~\ref{thmmelleray} can be proved in the same way as \cite[Theorem~1.1]{melleray} with the following two differences:
\begin{itemize}
\item The part before \emph{Claim} is to be ignored.
\item We want to preserve the distance, so $ d $ should not be replaced by $ \frac{d}{1+d} $ after the proof of \cite[Lemma~3.2]{melleray}. The purpose of this replacement is to fulfill $ \mathrm{diam}(Z) \leq 1 $. However, it holds for the original distance that $ \mathrm{diam}(Z) \leq 3 $, which is sufficient for the final step of the construction. Therefore, $ d $ can be preserved.
\end{itemize}

\section{The Construction}

The aim of this section is to prove the following theorem from which the results introduced above follow.

\begin{theorem} \label{thmmain3}
Let $ G $ be a Polish group acting continuously on a Polish space $ X $. Let $ d $ be a lower semicontinuous pseudometric on $ X $ with $ d \leq 1 $ such that $ d(x, y) = d(gx, gy) $ for any $ x, y \in X $ and $ g \in G $. Moreover, let there be a system $ \Phi $ of lower semicontinuous pseudometrics on $ X $ such that $ d = \sup_{\varphi \in \Phi} \varphi $ and every $ \varphi \in \Phi $ has the property that the mapping
$$ p \mapsto H_{\varphi}(p) \coloneqq \big\{ (x, u) : x \in X, \varphi(p, x) \leq u \leq 1 \big\} $$
from $ X $ to $ F(X \times [0, 1]) $ is Borel.

Then there exists a Borel mapping $ p \mapsto Y_{p} $ from $ X $ to $ F(\mathbb{U}) \setminus \{ \emptyset \} $ such that
$$ \varrho_{GH}(Y_{p}, Y_{q}) \leq \varrho_{Iso(\mathbb{U}), \varrho_{H}}(Y_{p}, Y_{q}) \leq \varrho_{G, d}(p, q) \leq 2 \varrho_{GH}(Y_{p}, Y_{q}) $$
for all $ p, q \in X $.
\end{theorem}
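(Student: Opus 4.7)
The plan is to construct, for each $p \in X$, a closed subset $Y_p \subseteq \mathbb{U}$ of the form $\bar X \cup \{a_p\}$, where $\bar X$ is a Polish metric space built from $(X,d)$ (by quotienting out $d$-null pairs and completing the induced metric) on which $G$ acts by isometries, and $a_p$ is a ``marker'' point whose prescribed distances to $\bar X$ encode the function $d(p,\cdot)$. Specifically, on the disjoint union $M = \bar X \sqcup \{a_p : p \in X/{\sim}\}$ I define
\[
  d_M(a_p,x) := C + d(p,x)\ (x \in \bar X), \qquad d_M(a_p,a_q) := d(p,q),
\]
for a separation constant $C = 2 > \mathrm{diam}(\bar X)$. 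Routine triangle-inequality checks show $d_M$ is a metric and $G$ acts by isometries via $g \cdot a_p := a_{gp}$. Passing to the Polish completion and applying Melleray's theorem (Theorem~\ref{thmmelleray}) produces a further Polish metric extension $\tilde M$ whose full isometry group is (the closure of) $G$; applying Kat\v{e}tov's theorem (Theorem~\ref{thmkatetov}) isometrically embeds $\tilde M$ into $\mathbb{U}$ so that every isometry of $\tilde M$ extends to $\mathbb{U}$. Set $Y_p := \bar X \cup \{a_p\}$ as a subset of $\mathbb{U}$.

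The inequality $\varrho_{GH}(Y_p,Y_q) \leq \varrho_{Iso(\mathbb{U}),\varrho_H}(Y_p,Y_q)$ is immediate from the definitions. For the middle inequality $\varrho_{Iso(\mathbb{U}),\varrho_H}(Y_p,Y_q) \leq \varrho_{G,d}(p,q)$, given $g \in G$ with $d(gp,q)$ near-optimal, extend the induced isometry of $\tilde M$ to $I_g \in Iso(\mathbb{U})$; then $I_g(Y_p) = \bar X \cup \{a_{gp}\}$, and from $d_M(a_{gp},a_q) = d(gp,q)$ together with $|d_M(a_{gp},x) - d_M(a_q,x)| \leq d(gp,q)$ for $x \in \bar X$, one computes $\varrho_H(I_g(Y_p),Y_q) \leq d(gp,q)$. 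For the lower bound $\varrho_{G,d}(p,q) \leq 2\varrho_{GH}(Y_p,Y_q)$, a correspondence $\mathcal R \subseteq Y_p \times Y_q$ of distortion less than $2r$ must (because of the separation $C > 1 \geq \mathrm{diam}(\bar X)$) contain the pair $(a_p,a_q)$ and in fact pair $a_p$ exclusively with $a_q$; any other pairing of $a_p$ with a point of $\bar X$ would force distances within $\bar X$ to exceed $C - \mathrm{diam}(\bar X) > 1$. Combining $(a_p,a_q)$ in the distortion bound with pairs $(x,y) \in \mathcal R \cap (\bar X \times \bar X)$ yields $|d(p,x) - d(q,y)| < 2r$, and selecting a pair of the form $(p,y_0) \in \mathcal R$ gives $d(q,y_0) < 2r$.

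The principal obstacle is then converting ``$y_0$ close to $q$'' into an actual element $g \in G$ with $d(gp,q) < 2r$: a low-distortion correspondence on an infinite Polish metric space need not come from an isometry, so $y_0 = gp$ is not automatic. Overcoming this requires strengthening the construction so that the rigidity $Iso(\tilde M) = G$ furnished by Melleray's theorem propagates to near-isometries --- for example, by replacing the single marker $a_p$ with a whole isometric copy of $\tilde M$ keyed to $p$, forcing any low-distortion correspondence between the $Y_p$'s to restrict globally to an approximate isometry of $\tilde M$ that is necessarily close to a genuine element of $G$. The other substantive point is Borel measurability of $p \mapsto Y_p$, and this is where the hypothesis on $\Phi$ enters: by separability one reduces $\Phi$ to a countable sup-generating subfamily, and since each $p \mapsto H_\varphi(p)$ is Borel in $F(X \times [0,1])$, the function $(p,x) \mapsto d(p,x) = \sup_{\varphi} \varphi(p,x)$ is Borel, after which standard Borel selection in $\mathbb{U}$ places $a_p$ measurably, making $p \mapsto Y_p \in F(\mathbb{U})$ Borel.
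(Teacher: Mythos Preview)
Your proposal correctly identifies several structural ingredients that the paper also uses (Melleray's rigidity theorem, Kat\v{e}tov's embedding, marker data encoding $d(p,\cdot)$), and your verification of the middle inequality $\varrho_{Iso(\mathbb{U}),\varrho_H}(Y_p,Y_q)\le\varrho_{G,d}(p,q)$ is essentially the same as the paper's. However, the lower bound $\varrho_{G,d}(p,q)\le 2\varrho_{GH}(Y_p,Y_q)$ is not proved, and you say so yourself: from your correspondence argument you only extract a point $y_0\in\bar X$ with $d(q,y_0)<2r$, and you then need $y_0$ to lie in the $G$-orbit of $p$. Your suggested remedy---include a copy of $\tilde M$ so that a low-distortion correspondence restricts to an ``approximate isometry of $\tilde M$ that is necessarily close to a genuine element of $G$''---does not work as stated. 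For a general Polish metric space there is no reason a $2r$-distortion correspondence should be close to an actual isometry; Melleray's theorem gives rigidity for exact isometries, not for approximate ones, and there is no automatic ``propagation to near-isometries''. This is the genuine gap.

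The paper's solution to exactly this obstacle is the heart of the construction, and it is quite different from what you sketch. Rather than a single copy of the rigid space $Z$ (their analogue of your $\tilde M$), the paper takes \emph{infinitely many scaled copies} $Z\times\{k\}$, $k\in\mathbb N$, with the metric on the $k$-th copy blown up by a factor $2^k$, glued together via a carefully designed metric (Claim~3.6 and the formula in step~(i)). A correspondence with distortion $2r$ between $Y_p$ and $Y_q$ must respect the $k$-levels (Claim~3.6), and on the $k$-th level it gives a correspondence on $Z$ with distortion $2r\cdot 2^{-k}$. Letting $k\to\infty$ produces a \emph{genuine} surjective isometry $I$ of $Z$, which by Melleray's theorem is $I_h$ for some $h\in G$; the attached ``epigraph'' pieces $Z\times[0,1]\times\{k\}$ then pin down $\varphi_k(hq,p)\le 2r$ for every $k$, whence $d(hq,p)\le 2r$ (Claim~3.7). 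This exponential-scaling trick is the missing idea in your proposal.

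A secondary point: in your construction the set $Y_p=\bar X\cup\{a_p\}$ does not contain $\tilde M$ at all, so the rigidity you obtained from Melleray's theorem is never visible to a Gromov--Hausdorff correspondence between $Y_p$ and $Y_q$; it is used only for the upper bound via $Iso(\mathbb U)$. Finally, your account of how the hypothesis on $\Phi$ yields Borel measurability is off: lower semicontinuity of $d$ already makes $(p,x)\mapsto d(p,x)$ Borel. In the paper the hypothesis is used because the sets $W_p$ are built from the epigraphs $H_{\varphi_k}(p)$ themselves, and Borelness of $p\mapsto H_{\varphi_k}(p)$ is exactly what is needed to show $p\mapsto Y_p$ is Borel (Claim~3.3).
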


Before proving this theorem, let us show first that Theorems~\ref{thmmain1} and \ref{thmmain2} are its consequences. It is sufficient to prove Theorem~\ref{thmmain1} in the case $ d \leq 1 $, as one can consider $ \min \{ d, 1 \} $ instead of $ d $. So, we just need to show that any continuous pseudometric $ \varphi $ on $ X $ satisfies the requirement from Theorem~\ref{thmmain3}. For an open $ U \subseteq X \times [0, 1] $, we show that $ \{ p \in X : H_{\varphi}(p) \cap U \neq \emptyset \} $ is Borel (actually open). Let us assume that $ p \in X $ is such that $ H_{\varphi}(p) $ intersects $ U $, and let us pick $ (x, u) \in H_{\varphi}(p) \cap U $. If $ u < 1 $, consider $ v > u $ such that $ (x, v) \in U $, and if $ u = 1 $, consider $ v = 1 $. As $ \varphi $ is continuous, for every $ q $ from a neighborhood of $ p $, we have $ \varphi(q, x) \leq v $, and so $ (x, v) \in H_{\varphi}(q) \cap U $.

Concerning Theorem~\ref{thmmain2}, the reducibility of $ \varrho_{GH} $ to $ \varrho_{Iso(\mathbb{U}), \varrho_{H}} $ follows from Theorem~\ref{thmmain3} and Theorem~\ref{thmcdk}, and the reducibility of $ \varrho_{Iso(\mathbb{U}), \varrho_{H}} $ to $ \varrho_{GH} $ follows from Theorem~\ref{thmmain1} and Lemma~\ref{lemmawijsman}.

Let us now turn to the proof of Theorem~\ref{thmmain3}. We notice first that there is a countable $ \Phi' \subseteq \Phi $ such that still $ d = \sup_{\varphi \in \Phi'} \varphi $. Indeed, the open sets $ \{ (x, y, u) \in X \times X \times [0, 1] : u < \varphi(x, y) \}, \varphi \in \Phi, $ form a covering of $ \{ (x, y, u) \in X \times X \times [0, 1] : u < d(x, y) \} $, and due to the Lindel\"{o}f property, countably many of them form a covering as well. We choose a sequence $ \varphi_{1}, \varphi_{2}, \dots $ in $ \Phi' $ such that every element of $ \Phi' $ appears infinitely many times.

Let $ \gamma $ be a compatible right-invariant metric on $ G $ with $ \gamma \leq 1 $ (let us note that the space $ (G, \gamma) $ needs not to be complete). Let $ \delta_{X} $ be a compatible complete metric on $ X $ with $ \delta_{X} \leq 1 $. Let us consider the maximum metric on $ G \times X $, i.e., the metric $ \delta_{G \times X}((g, x), (h, y)) = \max \{ \gamma(g, h), \delta_{X}(x, y) \} $. It is easy to check that the mapping $ I_{h} : (g, x) \mapsto (gh, x) $ is an isometry on $ G \times X $ for every $ h \in G $. We obtain the following claim based on Melleray's result.

\begin{claim} \label{cl1}
There is an extension $ Z $ of $ G \times X $ such that
\begin{itemize}
\item $ Z $ is a Polish metric space of diameter at most $ 2 $,
\item for any $ h \in G $, the isometry $ I_{h} : (g, x) \mapsto (gh, x) $ can be extended in a unique way from $ G \times X $ to a surjective isometry on $ Z $,
\item any surjective isometry on $ Z $ is an extension of $ I_{h} $ for some $ h \in G $.
\end{itemize}
\end{claim}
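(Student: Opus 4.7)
The plan is to apply Theorem~\ref{thmmelleray} to the Polish metric space $(G \times X, \delta_{G \times X})$, which has diameter at most $1$ (since $\gamma \leq 1$ and $\delta_{X} \leq 1$), together with the subgroup $\mathcal{G} = \{I_h : h \in G\} \subseteq Iso(G \times X)$, where $Iso(G \times X)$ carries the pointwise convergence topology. Right-invariance of $\gamma$ already ensures that each $I_h$ is an isometry of $(G \times X, \delta_{G \times X})$, because $\gamma(g_1 h, g_2 h) = \gamma(g_1, g_2)$ and the $X$-coordinate is untouched; and $h \mapsto I_h$ is a group homomorphism from $G$ into $Iso(G \times X)$.

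Next I verify that $\mathcal{G}$ is closed in $Iso(G \times X)$. Suppose $I_{h_n} \to I$ pointwise for some $I \in Iso(G \times X)$. For any fixed $x_0 \in X$, the points $(h_n, x_0) = I_{h_n}(1_G, x_0)$ converge to $I(1_G, x_0)$ in $(G \times X, \delta_{G \times X})$, and since this limit must land in $G \times X$, the sequence $h_n$ converges in $\gamma$ to some $h \in G$. Continuity of multiplication in $G$ then gives $I_{h_n}(g, x) = (gh_n, x) \to (gh, x) = I_h(g, x)$ for every $(g, x) \in G \times X$, so $I = I_h \in \mathcal{G}$. This is the step I expect to be most delicate, because $(G, \gamma)$ need not be complete and hence a Cauchy sequence in $G$ could fail to converge within $G$; what saves the argument is precisely that $I$ must map $G \times X$ onto $G \times X$, forcing the relevant limits to exist in $G$.

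With the hypotheses verified, Theorem~\ref{thmmelleray} produces a Polish metric space extension $Z$ of $G \times X$ in which each $I_h$ extends uniquely to a surjective isometry and every surjective isometry of $Z$ arises this way. For the diameter bound, the discussion following Theorem~\ref{thmmelleray} explains that dropping the $\frac{d}{1+d}$ rescaling from Melleray's construction still produces an extension of bounded diameter; tracing the construction one sees that each added point lies at distance at most $1$ from $G \times X$, so by the triangle inequality $\mathrm{diam}(Z) \leq 2$. Thus the only part that cannot be read off from the statement of Theorem~\ref{thmmelleray} as a black box is the quantitative diameter bound, which requires briefly inspecting Melleray's proof.
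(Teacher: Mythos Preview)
There is a genuine gap: you apply Theorem~\ref{thmmelleray} to $(G \times X, \delta_{G \times X})$, but that theorem requires a \emph{Polish metric space}, i.e., a separable \emph{complete} metric space. As the paper explicitly warns just before the claim, ``the space $(G, \gamma)$ needs not to be complete,'' so $(G \times X, \delta_{G \times X})$ need not be complete either, and Melleray's theorem is not available as stated. This is exactly why the paper first passes to the completion $\overline{G}$ of $(G,\gamma)$, extends each $I_h$ to a surjective isometry $I_h^*$ of $\overline{G} \times X$, and applies Theorem~\ref{thmmelleray} there. Your closedness argument, which relies on the limit $I(1_G,x_0)$ being forced to lie in $G \times X$, does not survive this passage: in $Iso(\overline{G} \times X)$ the limit of $(h_n,x_0)$ could a priori land in $(\overline{G}\setminus G)\times X$. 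The paper instead shows that $h \mapsto I_h^*$ is a homeomorphism of $G$ onto its image, so that $\{I_h^* : h \in G\}$ is a Polish subgroup of the Polish group $Iso(\overline{G}\times X)$ and hence automatically closed.

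A smaller issue concerns the diameter bound. Even granting that every added point lies within distance $1$ of the base space, the triangle inequality gives $\mathrm{diam}(Z) \leq 1+1+1 = 3$, not $2$; indeed the paper notes that Melleray's construction yields $\mathrm{diam}(Z) \leq 3$. The paper obtains diameter $\leq 2$ not by inspecting the construction but by post-composing the metric with an increasing concave function $\xi:[0,\infty)\to[0,2)$ satisfying $\xi(t)=t$ for $t\in[0,1]$, which caps the diameter while leaving distances inside $\overline{G}\times X$ unchanged.
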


\begin{proof}
Let $ \overline{G} $ denote the completion of $ (G, \gamma) $. For every $ h \in G $, let $ I_{h}^{*} $ be the unique surjective isometry on $ \overline{G} \times X $ extending $ I_{h} $. We show that the set $ \{ I_{h}^{*} : h \in G \} $ forms a closed subgroup of $ Iso(\overline{G} \times X) $. This is clearly a subgroup, as $ I_{h'}^{*} \circ I_{h}^{*} = I_{hh'}^{*} $, and by the fact that any Polish subgroup of a Polish group is closed (see e.g. \cite[Exercise~9.6]{kechris}), it is sufficient to show that $ h \mapsto I_{h}^{*} $ is a homeomorphism from $ G $ into $ Iso(\overline{G} \times X) $.

First, let $ h_{n} \to h $. Then $ I_{h_{n}}^{*}(g, x) = (gh_{n}, x) \to (gh, x) = I_{h}^{*}(g, x) $ for every $ (g, x) \in G \times X $. So, $ I_{h_{n}}^{*} $ converges pointwise to $ I_{h}^{*} $ on a dense subset of $ \overline{G} \times X $. As the mappings $ I_{h_{n}}^{*} $ and $ I_{h}^{*} $ are isometries, they are equicontinuous, and the pointwise convergence on the whole $ \overline{G} \times X $ follows. Conversely, let $ I_{h_{n}}^{*} \to I_{h}^{*} $. If we pick $ x \in X $ arbitrarily, then $ (h_{n}, x) = I_{h_{n}}^{*}(1_{G}, x) \to I_{h}^{*}(1_{G}, x) = (h, x) $, which means that $ h_{n} \to h $.

So, $ \{ I_{h}^{*} : h \in G \} $ is a closed subgroup of $ Iso(\overline{G} \times X) $ indeed, and we obtain from Theorem~\ref{thmmelleray} an extension $ Z $ of $ \overline{G} \times X $ satisfying all desired properties with the only possible exception of the diameter requirement. However, such property can be easily arranged by changing the metric appropriately (we can take an increasing concave function $ \xi : [0, \infty) \to [0, 2) $ with $ \xi(t) = t $ for $ t \in [0, 1] $ and consider $ \xi \circ \delta_{Z} $ instead of $ \delta_{Z} $).
\end{proof}

Now, let us consider
$$ Y = (Z \times \mathbb{N}) \cup (Z \times [0, 1] \times \mathbb{N}). $$
We define a compatible complete metric $ m $ on $ Y $ in three steps. Concerning the verification of the triangle inequality, some details will be left to the reader. The first step, perhaps the least obvious one, uses a similar idea as the proofs of \cite[Theorem~39]{cdk2} and \cite[Theorem~41]{cdk2}. The choice of the distances between points in $ Z \times \mathbb{N} $ will play a crucial role in the proof of Claim~\ref{cl6}.

(i) On the subset $ Z \times \mathbb{N} $ of $ Y $, we put
$$ m((z_{1}, k_{1}), (z_{2}, k_{2})) = 100 \cdot |2^{k_{1}} - 2^{k_{2}}| + 2^{\min \{ k_{1}, k_{2} \} } \delta_{Z}(z_{1}, z_{2}). $$
The triangle inequality
$$ m((z_{1}, k_{1}), (z_{3}, k_{3})) \leq m((z_{1}, k_{1}), (z_{2}, k_{2})) + m((z_{2}, k_{2}), (z_{3}, k_{3})) $$
can be easily checked when $ k_{2} \geq \min \{ k_{1}, k_{3} \} $ (it is possible to deal separately with the terms $ 100 \cdot |2^{k_{1}} - 2^{k_{2}}| $ and $ 2^{\min \{ k_{1}, k_{2} \} } \delta_{Z}(z_{1}, z_{2}) $). We can suppose that $ k_{1} \leq k_{3} $, so we deal now with the case $ k_{2} < k_{1} \leq k_{3} $, in which the inequality can be simplified to
$$ 200 \cdot 2^{k_{2}} + 2^{k_{1}} \delta_{Z}(z_{1}, z_{3}) \leq 200 \cdot 2^{k_{1}} + 2^{k_{2}} \delta_{Z}(z_{1}, z_{2}) + 2^{k_{2}} \delta_{Z}(z_{2}, z_{3}). $$
It is sufficient to use $ 200 \cdot 2^{k_{2}} \leq 100 \cdot 2^{k_{1}} $ and $ 2^{k_{1}} \delta_{Z}(z_{1}, z_{3}) \leq 2^{k_{1}} \cdot 2 $.

(ii) Let $ k \in \mathbb{N} $ be fixed. To attach $ Z \times [0, 1] \times \{ k \} $ to $ Z \times \mathbb{N} $, we put
$$ m((z_{1}, u_{1}, k), (z_{2}, u_{2}, k)) = |u_{1} - u_{2}| + 2^{k} \delta_{Z}(z_{1}, z_{2}) $$
and
$$ m((z_{1}, l), (z_{2}, u, k)) = u + 10 \cdot 2^{k} + m((z_{1}, l), (z_{2}, k)). $$
The triangle inequality can be verified easily, since $ (Z \times \mathbb{N}) \cup (Z \times [0, 1] \times \{ k \}) $ with $ m $ can be isometrically embedded into $ Z \times \mathbb{N} \times (\{ - 10 \cdot 2^{k} \} \cup [0, 1]) $ with the metric $ ((z_{1}, k_{1}, u_{1}), (z_{2}, k_{2}, u_{2})) \mapsto |u_{1} - u_{2}| + m((z_{1}, k_{1}), (z_{2}, k_{2})) $.

(iii) It remains to define distances between points in $ Z \times [0, 1] \times \{ k_{1} \} $ and points in $ Z \times [0, 1] \times \{ k_{2} \} $ for distinct $ k_{1} $ and $ k_{2} $. Assuming $ k_{1} \neq k_{2} $, we put
$$ m((z_{1}, u_{1}, k_{1}), (z_{2}, u_{2}, k_{2})) = u_{1} + 10 \cdot 2^{k_{1}} + u_{2} + 10 \cdot 2^{k_{2}} + m((z_{1}, k_{1}), (z_{2}, k_{2})). $$
To check the triangle inequality, one can for instance proceed similarly as in the previous step, considering a suitable subset of $ (Z \times \mathbb{N}) \times \prod_{k \in \mathbb{N}} (\{ - 10 \cdot 2^{k} \} \cup [0, 1]) $ with the sum metric.

Once we have defined the space $ Y $ and its metric $ m $, we can consider for every $ p \in X $ the subspace
$$ W_{p} = (Z \times \mathbb{N}) \cup \big\{ (g, x, u, k) : g \in G, x \in X, k \in \mathbb{N}, \varphi_{k}(gp, x) \leq u \leq 1 \big\}. $$
For every $ p \in X $, let $ Y_{p} $ be the closure of $ W_{p} $ in $ Y $. We now provide a series of claims concerning the spaces $ Y_{p} $.

\begin{claim} \label{cl2}
The mapping $ p \mapsto Y_{p} $ from $ X $ to $ F(Y) $ is Borel.
\end{claim}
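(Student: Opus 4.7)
The plan is to verify Borel measurability against the generators $\{F \in F(Y) : F \cap U \neq \emptyset\}$ of the Effros Borel structure, with $U$ ranging over open subsets of $Y$. Since $Y_p$ is the closure of $W_p$ and $U$ is open, this amounts to showing that $\{p \in X : W_p \cap U \neq \emptyset\}$ is Borel. The piece $Z \times \mathbb{N} \subseteq W_p$ does not involve $p$, so the task reduces to the $p$-dependent part
$$ W_p' = \{(g, x, u, k) \in G \times X \times [0, 1] \times \mathbb{N} : \varphi_k(gp, x) \leq u \leq 1\}. $$

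Next I would exploit the clopen decomposition of $Y$. From the formulas in steps (ii) and (iii), any point of the slice $Z \times [0, 1] \times \{k\}$ is at distance at least $10 \cdot 2^k \geq 10$ from everything outside that slice, so each slice (as well as $Z \times \mathbb{N}$ itself) is clopen in $Y$. Consequently, an open $U \subseteq Y$ splits as a countable union of the open sets $U_k = U \cap (Z \times [0, 1] \times \{k\})$ together with a $p$-irrelevant part inside $Z \times \mathbb{N}$, and it suffices to show, for each fixed $k$, that $\{p : W_p' \cap U_k \neq \emptyset\}$ is Borel.

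The substantive step is a change of variables that brings the hypothesis on $\Phi$ into play. Using the $G$-invariance $\varphi_k(gp, x) = \varphi_k(p, g^{-1}x)$ and substituting $y = g^{-1}x$, the condition $W_p' \cap U_k \neq \emptyset$ becomes: there exist $g \in G$, $y \in X$, $u \in [0, 1]$ with $((g, gy), u, k) \in U_k$ (using $G \times X \subseteq Z$) and $\varphi_k(p, y) \leq u \leq 1$. Since the action is continuous, the set
$$ V_k = \{(g, y, u) \in G \times X \times [0, 1] : ((g, gy), u, k) \in U_k\} $$
is open, and its projection $O_k \subseteq X \times [0, 1]$ onto the last two coordinates is open as well. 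A direct check gives
$$ W_p' \cap U_k \neq \emptyset \quad \Longleftrightarrow \quad H_{\varphi_k}(p) \cap O_k \neq \emptyset, $$
and the right-hand side is Borel in $p$ by the hypothesis that $p \mapsto H_{\varphi_k}(p)$ is a Borel map into $F(X \times [0, 1])$. Taking the countable union over $k$ completes the argument. There is no serious obstacle here; the content is the clopen splitting of $Y$ forced by the $10 \cdot 2^k$ gaps, and the invariance-driven rewriting that exposes the Borel hypothesis on $\Phi$.
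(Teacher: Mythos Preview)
Your argument is essentially the paper's: reduce to $\{p : W_p \cap U \neq \emptyset\}$, discard the $p$-independent piece $Z \times \mathbb{N}$, and for each $k$ rewrite the condition as $H_{\varphi_k}(p)$ meeting an open subset of $X \times [0,1]$; your $O_k$ coincides with the paper's $V_k = \{(g^{-1}x, u) : (g, x, u, k) \in U\}$, and your openness argument via continuity of the action is the same as theirs.

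One caveat worth flagging: your change of variables $\varphi_k(gp, x) = \varphi_k(p, g^{-1}x)$ requires $G$-invariance of $\varphi_k$, whereas Theorem~\ref{thmmain3} only assumes $G$-invariance of $d$, not of the individual pseudometrics in $\Phi$. The paper's proof makes the same identification (implicitly, in the asserted equivalence ``$W_p \cap U \neq \emptyset$ iff $\ldots \exists k : H_{\varphi_k}(p) \cap V_k \neq \emptyset$''), so your argument matches theirs exactly, including this step.
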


\begin{proof}
Given an open $ U \subseteq Y $, we need to check that the set $ \{ p \in X : Y_{p} \cap U \neq \emptyset \} $ is Borel. Let us show first that for each $ k \in \mathbb{N} $, the set
$$ V_{k} = \big\{ (g^{-1} x, u) : g \in G, x \in X, u \in [0, 1], (g, x, u, k) \in U \big\} $$
is open. Let us pick $ (g^{-1} x, u) \in V_{k} $ (where $ (g, x, u, k) \in U $). Let $ O \subseteq X $ be an open neighborhood of $ x $ and $ J \subseteq [0, 1] $ be a (relatively) open neighborhood of $ u $ such that $ \{ g \} \times O \times J \times \{ k \} \subseteq U $. Then $ (g^{-1} \cdot O) \times J $ is an open neighborhood of $ (g^{-1} x, u) $ contained in $ V_{k} $.

Now, for $ p \in X $, we have $ Y_{p} \cap U \neq \emptyset $ if and only if $ W_{p} \cap U \neq \emptyset $, and this is equivalent to
$$ (Z \times \mathbb{N}) \cap U \neq \emptyset \quad \textnormal{or} \quad \exists k : H_{\varphi_{k}}(p) \cap V_{k} \neq \emptyset. $$
By the assumption on the pseudometrics in $ \Phi $, the set of all $ p \in X $ with this property is Borel.
\end{proof}

\begin{claim} \label{cl3}
For all $ p \in X $ and $ h \in G $, there is $ I \in Iso(Y) $ that maps $ Y_{p} $ onto $ Y_{hp} $.
\end{claim}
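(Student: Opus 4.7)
The strategy is to lift the right-translation on $G \times X$ by $h^{-1}$ to a surjective isometry of the ambient space $Z$, and then to promote it to an isometry of $Y$ by applying it fibrewise to the $Z$-coordinate while leaving the other coordinates untouched.

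First, I would apply Claim~\ref{cl1} with the element $h^{-1} \in G$: this produces a surjective isometry $\bar{I}_{h^{-1}}$ of $Z$ extending the map $(g, x) \mapsto (g h^{-1}, x)$ on $G \times X$. I would then define $I : Y \to Y$ by
\begin{align*}
I(z, k) &= (\bar{I}_{h^{-1}}(z), k), \\
I(z, u, k) &= (\bar{I}_{h^{-1}}(z), u, k).
\end{align*}
Inspecting the three parts of the construction of $m$, the $Z$-coordinates enter only through $\delta_Z(z_1, z_2)$, so since $\bar{I}_{h^{-1}}$ preserves $\delta_Z$, the map $I$ is an isometry of $Y$, and surjectivity of $I$ is inherited from that of $\bar{I}_{h^{-1}}$.

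Next, I would verify the equality $I(W_p) = W_{hp}$. The slab $Z \times \mathbb{N}$ inside $W_p$ is clearly preserved. For a point $(g, x, u, k) \in W_p$, where $g \in G$, $x \in X$ and $\varphi_k(gp, x) \leq u$, its image $(gh^{-1}, x, u, k)$ again has first coordinate in $G \times X \subseteq Z$, and belongs to $W_{hp}$ precisely when $\varphi_k\bigl((gh^{-1})(hp), x\bigr) \leq u$. Since $(gh^{-1})(hp) = gp$, this is the same inequality, and because $g \mapsto gh^{-1}$ is a bijection of $G$, I get $I(W_p) = W_{hp}$. As $I$ is a homeomorphism, passing to closures in $Y$ yields $I(Y_p) = \overline{I(W_p)} = \overline{W_{hp}} = Y_{hp}$.

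The only real obstacle is a bit of bookkeeping: one must extend the translation by $h^{-1}$ (rather than by $h$) so that the identity $(gh^{-1})(hp) = gp$ converts the defining inequality of $W_p$ into that of $W_{hp}$. The reason that this translation is an isometry of $G \times X$ in the first place is the right-invariance of $\gamma$, as was already noted before Claim~\ref{cl1}.
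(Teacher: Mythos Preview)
Your proof is correct and follows essentially the same approach as the paper: lift the right-translation isometry on $G\times X$ to $Z$ via Claim~\ref{cl1}, apply it coordinatewise on $Y$, check that $m$ is preserved because it depends on the $Z$-coordinates only through $\delta_Z$, and verify that the $W$-sets are exchanged. The only cosmetic difference is that the paper extends $I_h$ (rather than $I_{h^{-1}}$) and thus obtains an isometry mapping $Y_{hp}$ onto $Y_p$, whereas you go directly from $Y_p$ to $Y_{hp}$; the two are inverses of each other.
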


\begin{proof}
If we denote by $ I_{h}^{**} $ the unique isometry on $ Z $ extending $ I_{h} $, then the mapping
$$ (z, k) \mapsto (I_{h}^{**}(z), k), \quad (z, u, k) \mapsto (I_{h}^{**}(z), u, k), \quad \textnormal{(in part. $ (g, x, u, k) \mapsto (gh, x, u, k) $)}, $$
is an isometry on $ Y $ which maps $ W_{hp} $ onto $ W_{p} $. It follows that it maps $ Y_{hp} $ onto $ Y_{p} $ as well.
\end{proof}

\begin{claim} \label{cl4}
For all $ p, q \in X $,
$$ \varrho_{Iso(Y), \varrho_{H}}(Y_{p}, Y_{q}) \leq \varrho_{G, d}(p, q). $$
\end{claim}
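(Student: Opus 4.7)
The plan is to reduce the stated bound to an estimate on the ordinary Hausdorff distance. By Claim~\ref{cl3}, for every $ g \in G $ there is an isometry $ I \in Iso(Y) $ with $ I(Y_{p}) = Y_{gp} $, hence $ \varrho_{Iso(Y), \varrho_{H}}(Y_{p}, Y_{q}) \leq \varrho_{H}(Y_{gp}, Y_{q}) $. It therefore suffices to establish
$$ \varrho_{H}(Y_{p}, Y_{q}) \leq d(p, q) \quad \textnormal{for all } p, q \in X, $$
since applying this with $ gp $ in place of $ p $ and taking the infimum over $ g \in G $ produces the desired bound.

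I would prove the reduced inequality by working directly on the dense subsets $ W_{p} $ and $ W_{q} $. The ``common spine'' $ Z \times \mathbb{N} $ lies in both sets, so the only task is to match every point $ (g, x, u, k) \in W_{p} $ to a nearby point of $ W_{q} $, and vice versa. The natural candidate is $ (g, x, u', k) $ with $ u' = \max \{ u, \varphi_{k}(gq, x) \} $. Its membership in $ W_{q} $ reduces to checking $ u' \leq 1 $, which is immediate from $ \varphi_{k} \leq d \leq 1 $; and by the explicit formula in step~(ii) of the construction of $ m $, the distance between the two points along a fixed fiber equals simply $ |u - u'| $. Using $ u \geq \varphi_{k}(gp, x) $, the triangle inequality for the pseudometric $ \varphi_{k} $, and the $ G $-invariance of $ d $, one gets $ u' - u \leq \varphi_{k}(gq, gp) \leq d(gq, gp) = d(p, q) $. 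A symmetric matching in the other direction yields the same bound, so $ \varrho_{H}(W_{p}, W_{q}) \leq d(p, q) $, and passing to closures completes the argument.

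I do not anticipate a serious obstacle: Claim~\ref{cl3} already supplies the required global isometry, step~(ii) of the construction has been tuned precisely so that moving along a single fiber costs only $ |u - u'| $, and the boundedness hypothesis $ d \leq 1 $ is exactly what makes $ u' \leq 1 $ automatic. The one place that requires a moment of care is the admissibility check $ u' \leq 1 $, and indeed this is the reason the normalization $ d \leq 1 $ was placed in the hypotheses of Theorem~\ref{thmmain3}.
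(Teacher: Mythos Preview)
The proposal is correct and follows essentially the same approach as the paper: both reduce to the Hausdorff estimate $\varrho_{H}(Y_{p},Y_{q})\leq d(p,q)$ via the key inequality $|\varphi_{k}(gp,x)-\varphi_{k}(gq,x)|\leq\varphi_{k}(gp,gq)\leq d(p,q)$, then invoke Claim~\ref{cl3} and take the infimum over $G$. Your version simply makes the matching $(g,x,u,k)\mapsto(g,x,\max\{u,\varphi_{k}(gq,x)\},k)$ explicit where the paper leaves it implicit.
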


\begin{proof}
Let us note that the Hausdorff distance between $ Y_{p} $ and $ Y_{q} $ is at most $ d(p, q) $, which follows from the fact that $ |\varphi_{k}(gp, x) - \varphi_{k}(gq, x)| \leq \varphi_{k}(gp, gq) \leq d(gp, gq) = d(p, q) $ for every $ g \in G, x \in X $ and $ k \in \mathbb{N} $. So, using Claim~\ref{cl3}, we obtain for each $ h \in G $ that $ \varrho_{Iso(Y), \varrho_{H}}(Y_{p}, Y_{q}) \leq \varrho_{H}(Y_{hp}, Y_{q}) \leq d(hp, q) $, and it is sufficient to consider the infimum over $ h \in G $.
\end{proof}

\begin{claim} \label{cl5}
Let $ p \in X, w \in W_{p} $, and let $ A(w, p) = \{ m(w', w) : w' \in W_{p} \} $. Let further $ B \subseteq [0, \infty) $ be such that $ \varrho_{H}(B, A(w, p)) \leq 1 $ and $ \beta_{l} $ be defined by $ \beta_{l} = \inf (B \cap (80 \cdot 2^{l}, \infty)) $ for $ l \in \mathbb{N} $.

If $ w \in Z \times \{ k \} $, then $ \liminf_{l \to \infty} (100 \cdot 2^{l} - \beta_{l}) \in [100 \cdot 2^{k} - 1, 100 \cdot 2^{k} + 1] $.

If $ w \in Z \times [0, 1] \times \{ k \} $, then $ \liminf_{l \to \infty} (100 \cdot 2^{l} - \beta_{l}) \in [90 \cdot 2^{k} - 2, 90 \cdot 2^{k} + 1] $.

Consequently, if $ \varrho_{H}(A(w, p), A(w', q)) \leq 1 $ for some $ p, q \in X, w \in W_{p}, w' \in W_{q} $, then $ w \in Z \times \{ k \} $ is equivalent to $ w' \in Z \times \{ k \} $ and $ w \in Z \times [0, 1] \times \{ k \} $ is equivalent to $ w' \in Z \times [0, 1] \times \{ k \} $.
\end{claim}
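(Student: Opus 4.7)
The plan is to exploit the precise engineering of the metric $m$ on $Y$: for fixed $w \in W_p$, the distances from $w$ to other points of $W_p$ exceeding $80 \cdot 2^l$ cluster near an identifiable ``marker'' value depending on the type and level of $w$, separated by a wide gap from the smaller distances. The key quantity is $\alpha_l := \inf (A(w,p) \cap (80 \cdot 2^l, \infty))$, which I would first compute exactly for large $l$, and then transfer the estimate to $\beta_l$ via the Hausdorff-distance hypothesis.

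The core calculation is a case analysis on the location of $w' \in W_p$ using definitions (i)--(iii). For $w = (z_0, k) \in Z \times \{k\}$ and $l$ much larger than $k$: points $(z, l') \in Z \times \mathbb{N}$ with $l' \leq l-1$ contribute distances at most $50 \cdot 2^l + 2 \cdot 2^k$; points $(z, u, k') \in Z \times [0,1] \times \{k'\}$ with $k' \leq l-1$ contribute at most $55 \cdot 2^l + O(2^k)$; points at level $\geq l+1$ in $Z \times \mathbb{N}$ contribute at least $200 \cdot 2^l - 100 \cdot 2^k$; those in $Z \times [0,1] \times \{k'\}$ with $k' \geq l$ contribute at least $110 \cdot 2^l - 100 \cdot 2^k$; and the points in $Z \times \{l\}$ contribute values in $[100 \cdot 2^l - 100 \cdot 2^k,\, 100 \cdot 2^l - 98 \cdot 2^k]$, with the left endpoint attained at $(z_0, l) \in Z \times \mathbb{N} \subseteq W_p$. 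Hence $\alpha_l = 100 \cdot 2^l - 100 \cdot 2^k$ and $A(w,p) \cap (56 \cdot 2^l + O(2^k),\ \alpha_l) = \emptyset$ for all $l$ large. The analogous analysis for $w = (z_0, u_0, k) \in Z \times [0,1] \times \{k\}$, which via rule (ii) pays an extra $u_0 + 10 \cdot 2^k$ on every distance to $Z \times \mathbb{N}$, yields $\alpha_l = u_0 + 100 \cdot 2^l - 90 \cdot 2^k$, again attained at $(z_0, l) \in W_p$ with a gap of the same order.

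Next I would pass from $\alpha_l$ to $\beta_l$. The hypothesis $\varrho_H(B, A(w,p)) \leq 1$ yields that every $b \in B \cap (80 \cdot 2^l, \infty)$ lies within $1$ of some $a \in A(w,p)$, and the gap forces that $a$ to be $\geq \alpha_l$; hence $\beta_l \geq \alpha_l - 1$. Conversely, $\alpha_l \in A(w,p)$ admits a nearby $b \in B$, which for large $l$ must itself exceed $80 \cdot 2^l$, so $\beta_l \leq \alpha_l + 1$. Substituting and taking liminf produces $100 \cdot 2^l - \beta_l \in [100 \cdot 2^k - 1,\, 100 \cdot 2^k + 1]$ in the first case, and $100 \cdot 2^l - \beta_l \in [90 \cdot 2^k - u_0 - 1,\, 90 \cdot 2^k - u_0 + 1] \subseteq [90 \cdot 2^k - 2,\, 90 \cdot 2^k + 1]$ in the second (the wider interval absorbing $u_0 \in [0,1]$). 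The final consequence is then immediate: applying the first part to both $w$ and $w'$ (with $B = A(w', q)$ and $B = A(w, p)$ respectively) yields a common liminf that belongs to the interval associated with each, and since $\{100 \cdot 2^k : k \in \mathbb{N}\} \cup \{90 \cdot 2^k : k \in \mathbb{N}\}$ is pairwise separated by at least $10$, the four families of intervals are pairwise disjoint except when both type and level coincide.

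The main obstacle is the bookkeeping in the case analysis: isolating a gap of width at least $2$ just below $\alpha_l$ requires handling all sub-cases for $w'$ (base versus attached, and each relevant level range relative to $l$) without letting the $O(2^k)$ corrections swamp the gap. The somewhat peculiar constants $80, 90, 100, 10$ built into $m$ appear tuned precisely so that this gap persists at every sufficiently large scale $l$.
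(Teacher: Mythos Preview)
Your proposal is correct and follows essentially the same approach as the paper: both arguments locate, for $l$ large, a wide gap in $A(w,p)$ just below the value $100(2^{l}-2^{k})$ (resp.\ $100(2^{l}-2^{k})+u+10\cdot 2^{k}$), attained at the point $(z,l)\in W_{p}$, and then transfer this to $\beta_{l}$ via the Hausdorff-distance hypothesis to pin $100\cdot 2^{l}-\beta_{l}$ in the stated interval. Your explicit introduction of $\alpha_{l}$ and the slightly finer splitting of the level-$\geq l$ cases are cosmetic; the paper groups levels $\{1,\dots,l-1\}$ versus $\{l,l+1,\dots\}$ and records the same gap $(80\cdot 2^{l}-2,\ 100(2^{l}-2^{k}))$ directly.
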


\begin{proof}
Let us first consider $ w = (z, k) $. For every $ l \geq k + 3 $,
\begin{itemize}
\item if $ w' \in (Z \times \{ 1, \dots, l-1 \}) \cup (Z \times [0, 1] \times \{ 1, \dots, l-1 \}) $, then $ m(w', w) \leq 1 + 110 \cdot 2^{l-1} + 2^{k} \cdot 2 \leq 80 \cdot 2^{l} - 2 $,
\item if $ w' \in (Z \times \{ l, l+1, \dots \}) \cup (Z \times [0, 1] \times \{ l, l+1, \dots \}) $, then $ m(w', w) \geq 100 \cdot (2^{l} - 2^{k}) $,
\item $ w' = (z, l) $ belongs to $ W_{p} $ and $ m(w', w) = 100 \cdot (2^{l} - 2^{k}) $.
\end{itemize}
So, the set $ A(w, p) $ does not intersect the interval $ (80 \cdot 2^{l} - 2, 100 \cdot (2^{l} - 2^{k})) $ and contains $ 100 \cdot (2^{l} - 2^{k}) $. It follows that $ \beta_{l} \in [100 \cdot (2^{l} - 2^{k}) - 1, 100 \cdot (2^{l} - 2^{k}) + 1] $, and so $ 100 \cdot 2^{l} - \beta_{l} \in [100 \cdot 2^{k} - 1, 100 \cdot 2^{k} + 1] $.

Now, let us consider $ w = (z, u, k) $. For every $ l \geq k + 3 $,
\begin{itemize}
\item if $ w' \in (Z \times \{ 1, \dots, l-1 \}) \cup (Z \times [0, 1] \times \{ 1, \dots, l-1 \}) $, then $ m(w', w) \leq 1 + 110 \cdot 2^{l-1} + 2^{k} \cdot 2 + 1 + 10 \cdot 2^{k} \leq 80 \cdot 2^{l} - 2 $,
\item if $ w' \in (Z \times \{ l, l+1, \dots \}) \cup (Z \times [0, 1] \times \{ l, l+1, \dots \}) $, then $ m(w', w) \geq 100 \cdot (2^{l} - 2^{k}) + u + 10 \cdot 2^{k} $,
\item $ w' = (z, l) $ belongs to $ W_{p} $ and $ m(w', w) = 100 \cdot (2^{l} - 2^{k}) + u + 10 \cdot 2^{k} $.
\end{itemize}
So, the set $ A(w, p) $ does not intersect the interval $ (80 \cdot 2^{l} - 2, 100 \cdot (2^{l} - 2^{k}) + 10 \cdot 2^{k}) $ and intersects the interval $ [100 \cdot (2^{l} - 2^{k}) + 10 \cdot 2^{k}, 100 \cdot (2^{l} - 2^{k}) + 1 + 10 \cdot 2^{k}] $. It follows that $ \beta_{l} \in [100 \cdot (2^{l} - 2^{k}) + 10 \cdot 2^{k} - 1, 100 \cdot (2^{l} - 2^{k}) + 1 + 10 \cdot 2^{k} + 1] $, and so $ 100 \cdot 2^{l} - \beta_{l} \in [90 \cdot 2^{k} - 2, 90 \cdot 2^{k} + 1] $.

Concerning the consequence part, we can choose the same set $ B $ for both points $ w, w' $ (the choice $ B = A(w, p) $ works, as well as the choice $ B = A(w', q) $). It is then sufficient to note that $ B $ determines to which one of the sets $ Z \times \{ k \} $ or $ Z \times [0, 1] \times \{ k \} $ the points $ w, w' $ belong, as the intervals $ [100 \cdot 2^{k} - 1, 100 \cdot 2^{k} + 1] $ together with the intervals $ [90 \cdot 2^{k} - 2, 90 \cdot 2^{k} + 1] $ are pairwise disjoint.
\end{proof}

\begin{claim} \label{cl6}
For all $ p, q \in X $,
$$ \varrho_{G, d}(p, q) \leq 2 \varrho_{GH}(Y_{p}, Y_{q}). $$
\end{claim}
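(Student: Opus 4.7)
The plan is as follows. First reduce to the case $\varrho_{GH}(Y_p, Y_q) < 1/2$, since otherwise $2\varrho_{GH}(Y_p, Y_q) \geq 1 \geq \varrho_{G, d}(p, q)$ (using $d \leq 1$). Fix $r$ with $\varrho_{GH}(Y_p, Y_q) < r \leq 1/2$ and a correspondence $\mathcal{R} \subseteq Y_p \times Y_q$ of $m$-distortion less than $2r \leq 1$. The aim is to produce $h \in G$ with $d(hp, q) \leq 2r$; letting $r \searrow \varrho_{GH}(Y_p, Y_q)$ then gives the claim. The distortion bound yields $\varrho_H(A(w, p), A(w', q)) \leq 2r \leq 1$ for each pair $w \, \mathcal{R} \, w'$ with $w \in W_p$, $w' \in W_q$, so Claim~\ref{cl5} forces $\mathcal{R}$ to respect each stratum $Z \times \{k\}$ and $Z \times [0, 1] \times \{k\}$.

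The main step is to extract a surjective isometry of $Z$ from $\mathcal{R}$. For each $z \in Z$ and $k$, pick selectors $\sigma_k(z), \tau_k(z) \in Z$ with $(z, k) \, \mathcal{R} \, (\sigma_k(z), k)$ and $(\tau_k(z), k) \, \mathcal{R} \, (z, k)$. Since $m$ on $Z \times \{k\}$ equals $2^k \delta_Z$, both maps have $\delta_Z$-distortion less than $2r / 2^k$. Comparing $m((z, k), (z, k+1)) = 100 \cdot 2^k$ with $m((\sigma_k(z), k), (\sigma_{k+1}(z), k+1)) = 100 \cdot 2^k + 2^k \delta_Z(\sigma_k(z), \sigma_{k+1}(z))$ via the distortion bound yields $\delta_Z(\sigma_k(z), \sigma_{k+1}(z)) < 2r / 2^k$, so $(\sigma_k(z))_k$ is Cauchy in the complete metric space $(Z, \delta_Z)$ and converges to some $\sigma_\infty(z) \in Z$; analogously $\tau_\infty(z) = \lim_k \tau_k(z)$ exists. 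Both $\sigma_\infty, \tau_\infty$ are isometric embeddings of $Z$ into itself. Applying the distortion bound to the pairs $(\tau_k(z'), k) \, \mathcal{R} \, (z', k)$ and $(\tau_k(z'), k) \, \mathcal{R} \, (\sigma_k(\tau_k(z')), k)$ gives $\delta_Z(z', \sigma_k(\tau_k(z'))) < 2r / 2^k$, which passes in the limit to $\sigma_\infty \circ \tau_\infty = \mathrm{id}_Z$; symmetrically $\tau_\infty \circ \sigma_\infty = \mathrm{id}_Z$. Hence $\tau_\infty$ is a surjective isometry of $Z$, and by Claim~\ref{cl1} there is $h \in G$ with $\tau_\infty = I_h^*$; in particular, $\tau_k((1_G, q)) \to (h, q) \in G \times X \subseteq Z$.

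To read off $\varphi_k(hp, q)$, I use the $[0, 1]$-stratum. The $Y_q$ points $((1_G, q), 0, k) \in W_q$ (in $W_q$ since $\varphi_k(q, q) = 0$) and $((1_G, q), k) \in Z \times \{k\}$ are at $m$-distance $10 \cdot 2^k$, whereas their $\mathcal{R}$-preimages in $Y_p$, of shapes $(\tilde z_k, \tilde u_k, k)$ and $(\tau_k((1_G, q)), k)$ respectively by Claim~\ref{cl5}, are at $m$-distance $\tilde u_k + 10 \cdot 2^k + 2^k \delta_Z(\tilde z_k, \tau_k((1_G, q)))$. Distortion then forces $\tilde u_k + 2^k \delta_Z(\tilde z_k, \tau_k((1_G, q))) < 2r$, hence $\tilde u_k < 2r$ and $\tilde z_k \to (h, q)$ in $Z$. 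Since $(\tilde z_k, \tilde u_k, k) \in \overline{W_p}$ and the $Z \times \mathbb{N}$ part of $W_p$ is $m$-separated from the stratum $Z \times [0, 1] \times \{k\}$ by at least $10 \cdot 2^k$, the approximating sequence lies in $W_p \cap (Z \times [0, 1] \times \{k\})$: there exist $((g_n^k, x_n^k), v_n^k, k) \in W_p$ with $\varphi_k(g_n^k p, x_n^k) \leq v_n^k$ converging to $(\tilde z_k, \tilde u_k, k)$ as $n \to \infty$.

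Finally, fix $\varphi \in \Phi'$, choose $k_j \to \infty$ with $\varphi_{k_j} = \varphi$ (possible since each $\varphi \in \Phi'$ appears infinitely often in the enumeration), and pick $n_j$ so that $((g_{n_j}^{k_j}, x_{n_j}^{k_j}), v_{n_j}^{k_j})$ is within $1/j$ of $(\tilde z_{k_j}, \tilde u_{k_j})$ in $Z \times [0, 1]$. Since $\delta_Z$ coincides with $\delta_{G \times X}$ on $G \times X$ and $\tilde z_{k_j} \to (h, q) \in G \times X$, the triangle inequality forces $g_{n_j}^{k_j} \to h$ in $(G, \gamma)$ and $x_{n_j}^{k_j} \to q$ in $(X, \delta_X)$; continuity of the action gives $g_{n_j}^{k_j} p \to hp$, and lower semicontinuity of $\varphi$ yields
\[ \varphi(hp, q) \leq \liminf_j v_{n_j}^{k_j} \leq \liminf_j (\tilde u_{k_j} + 1/j) \leq 2r. \]
Taking the supremum over $\varphi \in \Phi'$ gives $d(hp, q) \leq 2r$, as required. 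The main obstacle I expect is the second paragraph: verifying that the Cauchy limits $\sigma_\infty, \tau_\infty$ really compose to identity maps so that $\tau_\infty$ is a genuine surjective isometry of $Z$ to which Claim~\ref{cl1} can be applied.
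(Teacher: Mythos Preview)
Your proof is correct and follows the same strategy as the paper: use Claim~\ref{cl5} to show the correspondence respects the strata $Z\times\{k\}$ and $Z\times[0,1]\times\{k\}$, extract a surjective isometry of $Z$ as a limit over $k$ from the $Z\times\{k\}$ levels, identify it with some $I_h$ via Claim~\ref{cl1}, and then read off the bound on $\varphi_k$ from the $[0,1]$-stratum. The only notable difference is that the paper takes the correspondence between the dense subsets $W_p$ and $W_q$ (noting $\varrho_{GH}(Y_p,Y_q)=\varrho_{GH}(W_p,W_q)$), so the correspondent of $(1_G,p,0,k)$ is directly of the form $(g,x,u,k)$ with $g\in G$, $x\in X$, $\varphi_k(gq,x)\leq u$, which eliminates your approximation step in the third paragraph.
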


\begin{proof}
Given $ r > \varrho_{GH}(Y_{p}, Y_{q}) $, we need to show that $ 2r \geq \varrho_{G, d}(p, q) $. Since $ r > \varrho_{GH}(Y_{p}, Y_{q}) = \varrho_{GH} (W_{p}, W_{q}) $, there is a correspondence $ \mathcal{R} $ between $ W_{p} $ and $ W_{q} $ such that
$$ a \mathcal{R} a' \; \& \; b \mathcal{R} b' \quad \Rightarrow \quad |m(a, b) - m(a', b')| \leq 2r. $$
As $ \varrho_{G, d}(p, q) \leq 1 $, we can assume that $ 2r < 1 $. In such a case, due to Claim~\ref{cl5}, elements of $ Z \times \{ k \} $ may correspond only to elements of $ Z \times \{ k \} $ itself, and the same holds for $ Z \times [0, 1] \times \{ k \} $. Consequently, the relations
$$ \mathcal{R}_{k} = \big\{ (z, z') \in Z \times Z : (z, k) \mathcal{R} (z', k) \big\}, \quad k \in \mathbb{N}, $$
are correspondences on $ Z $.

It is easy to check that
$$ z \mathcal{R}_{k_{1}} z_{1} \; \& \; z \mathcal{R}_{k_{2}} z_{2} \quad \Rightarrow \quad \delta_{Z}(z_{1}, z_{2}) \leq 2^{-\min \{ k_{1}, k_{2} \} } \cdot 2r, $$
as $ (z, k_{1}) \mathcal{R} (z_{1}, k_{1}), (z, k_{2}) \mathcal{R} (z_{2}, k_{2}) $ implies $ 2r \geq |m((z, k_{1}), (z, k_{2})) - m((z_{1}, k_{1}), (z_{2}, k_{2}))| = | 100 \cdot |2^{k_{1}} - 2^{k_{2}}| + 2^{\min \{ k_{1}, k_{2} \} } \delta_{Z}(z, z) - 100 \cdot |2^{k_{1}} - 2^{k_{2}}| - 2^{\min \{ k_{1}, k_{2} \} } \delta_{Z}(z_{1}, z_{2}) | = 2^{\min \{ k_{1}, k_{2} \} } \delta_{Z}(z_{1}, z_{2}) $.

It follows that for every $ z \in Z $, there is $ I(z) \in Z $ such that
$$ z \mathcal{R}_{k} z' \quad \Rightarrow \quad \delta_{Z}(I(z), z') \leq 2^{-k} \cdot 2r $$
(if we consider $ S_{j} = \bigcup_{k=j}^{\infty} z \mathcal{R}_{k} $, then the diameter of $ S_{j} $ is at most $ 2^{-j} \cdot 2r $, and we can take $ \{ I(z) \} = \bigcap_{j=1}^{\infty} \overline{S_{j}} $). Similarly, for every $ z \in Z $, there is $ J(z) \in Z $ such that
$$ z' \mathcal{R}_{k} z \quad \Rightarrow \quad \delta_{Z}(z', J(z)) \leq 2^{-k} \cdot 2r. $$

We claim that $ I $ and $ J $ are isometries. It is sufficient to consider $ I $ only. Let $ z_{1}, z_{2} \in Z $, and let $ k \in \mathbb{N} $ be arbitrary. If we choose $ z'_{1}, z'_{2} $ such that $ z_{1} \mathcal{R}_{k} z'_{1}, z_{2} \mathcal{R}_{k} z'_{2} $, then $ \delta_{Z}(I(z_{1}), z'_{1}) \leq 2^{-k} \cdot 2r, \delta_{Z}(I(z_{2}), z'_{2}) \leq 2^{-k} \cdot 2r $. At the same time, from $ (z_{1}, k) \mathcal{R} (z'_{1}, k), (z_{2}, k) \mathcal{R} (z'_{2}, k) $ we get $ | m((z_{1}, k), (z_{2}, k)) - m((z'_{1}, k), (z'_{2}, k)) | \leq 2r $, i.e., $ 2^{k} \cdot | \delta_{Z}(z_{1}, z_{2}) - \delta_{Z}(z'_{1}, z'_{2}) | \leq 2r $. By the triangle inequality, $ | \delta_{Z}(z_{1}, z_{2}) - \delta_{Z}(I(z_{1}), I(z_{2})) | \leq 3 \cdot 2^{-k} \cdot 2r $. Since $ k \in \mathbb{N} $ was arbitrary, we finally see that $ \delta_{Z}(I(z_{1}), I(z_{2})) = \delta_{Z}(z_{1}, z_{2}) $.

In order to show that $ I $ is surjective, we further claim that $ J $ is inverse to $ I $. Let $ z \in Z $, and let $ k \in \mathbb{N} $ be arbitrary. If we choose $ z' $ such that $ z \mathcal{R}_{k} z' $, then $ \delta_{Z}(I(z), z') \leq 2^{-k} \cdot 2r $ and $ \delta_{Z}(z, J(z')) \leq 2^{-k} \cdot 2r $. As $ \delta_{Z}(J(I(z)), J(z')) = \delta_{Z}(I(z), z') $, the triangle inequality provides $ \delta_{Z}(J(I(z)), z) \leq 2 \cdot 2^{-k} \cdot 2r $. Since $ k \in \mathbb{N} $ was arbitrary, we see that $ \delta_{Z}(J(I(z)), z) = 0 $.

So, we have shown that $ I $ is a surjective isometry on $ Z $, and it follows that $ I $ is the extension of $ I_{h} $ for some $ h \in G $.

Now, let $ k \in \mathbb{N} $ be arbitrary. If we choose $ z $ such that $ (1_{G}, p) \mathcal{R}_{k} z $ and $ g, x, u $ such that $ (1_{G}, p, 0, k) \mathcal{R} (g, x, u, k) $, then
$$ \big| m((1_{G}, p, k), (1_{G}, p, 0, k)) - m((z, k), (g, x, u, k)) \big| \leq 2r. $$
The involved distances are $ 10 \cdot 2^{k} $ and $ u + 10 \cdot 2^{k} + 2^{k} \delta_{Z}(z, (g, x)) $, hence we get
$$ u + 2^{k} \delta_{Z}(z, (g, x)) \leq 2r. $$
As $ (g, x, u, k) \in W_{q} $, we have $ \varphi_{k}(gq, x) \leq u \leq 1 $, and thus
$$ \varphi_{k}(gq, x) \leq 2r. $$
Since $ \delta_{Z}((h, p), z) = \delta_{Z}(I(1_{G}, p), z) \leq 2^{-k} \cdot 2r $, the triangle inequality provides $ \delta_{Z}((h, p), (g, x)) \leq 2^{-k} \cdot 2r + 2^{-k} \cdot 2r $, i.e.,
$$ \gamma(h, g) \leq 2^{-k} \cdot 4r, \quad \delta_{X}(p, x) \leq 2^{-k} \cdot 4r. $$

So, we have seen that for every $ k \in \mathbb{N} $, there are $ g_{k} \in G $ and $ x_{k} \in X $ such that
$$ \varphi_{k}(g_{k}q, x_{k}) \leq 2r, \quad \gamma(h, g_{k}) \leq 2^{-k} \cdot 4r, \quad \delta_{X}(p, x_{k}) \leq 2^{-k} \cdot 4r. $$
We have $ g_{k} \to h $, $ x_{k} \to p $, and for every $ \varphi \in \Phi' $, we get $ \varphi(hq, p) \leq 2r $, as it is lower semicontinuous and $ \varphi = \varphi_{k} $ infinitely many times. It follows that $ \varrho_{G, d}(q, p) \leq d(hq, p) \leq 2r $.
\end{proof}

To finish the proof of Theorem~\ref{thmmain3}, we apply Theorem~\ref{thmkatetov}. We may suppose that $ Y $ is actually a subspace of $ \mathbb{U} $ such that any surjective isometry on $ Y $ can be extended to a surjective isometry on $ \mathbb{U} $. It is easy to check that
$$ \varrho_{GH}(Y_{p}, Y_{q}) \leq \varrho_{Iso(\mathbb{U}), \varrho_{H}}(Y_{p}, Y_{q}) \leq \varrho_{Iso(Y), \varrho_{H}}(Y_{p}, Y_{q}) $$
for all $ p, q \in X $. Together with Claims \ref{cl4}, \ref{cl6} and \ref{cl2}, this shows that the mapping $ p \mapsto Y_{p} $ works, which completes the proof of Theorem~\ref{thmmain3}.

\begin{remark}
Let us note here that for all $ p \in X $ and $ h \in G $, there is $ I \in Iso(\mathbb{U}) $ that maps $ Y_{p} $ onto $ Y_{hp} $ (this follows from Claim~\ref{cl3}). Hence, our reduction has the property that
$$ p E^{X}_{G} q \quad \Rightarrow \quad Y_{p} E^{F(\mathbb{U})}_{Iso(\mathbb{U})} Y_{q} \quad \Rightarrow \quad \textnormal{$ Y_{p} $ and $ Y_{q} $ are isometric} \quad \Rightarrow \quad \varrho_{GH}(Y_{p}, Y_{q}) = 0. $$
Moreover, if $ d $ is the discrete metric, then this holds with equivalences. In this way, we obtain the result of Clemens, Gao and Kechris that $ E^{X}_{G} $ is Borel reducible to the relation of isometry on $ F(\mathbb{U}) $, as well as to the relation $ E^{F(\mathbb{U})}_{Iso(\mathbb{U})} $, see \cite{clemens, gaokech}.
\end{remark}

\section{Comments and questions} \label{section:comments}

An immediate question concerning Theorem~\ref{thmmain1} remains open.

\begin{question} \label{question1}
Does Theorem~\ref{thmmain1} hold without the requirement of the existence of the system $ \Phi $ of continuous pseudometrics?
\end{question}

We will show in the next remark that the existence of such a system is not guaranteed for a lower semicontinuous pseudometric $ d $. However, we do not know if the more general system from Theorem~\ref{thmmain3} needs to exist. We just know that the mapping $ p \mapsto H_{\varphi}(p) $ needs not to be Borel for a lower semicontinuous pseudometric $ \varphi $, see Remark~\ref{remarknonmeas}. Later, we will show that Question~\ref{question1} has a positive answer in the case of a countable group action, see Proposition~\ref{propcountgrp}.

Let us recall that an equivalence relation $ E $ on a Polish space $ X $ is called \emph{smooth} if there exists a Borel reduction of $ E $ into the relation of equality on a Polish space $ Y $. It is known that every closed (actually every $ G_{\delta} $) equivalence relation is smooth, see \cite[Corollary~1.2]{haloke}. It turns out that the question if a system $ \Phi $ from Theorem~\ref{thmmain1} exists for any lower semicontinuous pseudometric is connected with the question if the reduction witnessing smoothness of a general closed equivalence relation can be found not only Borel, but even continuous. We show in the next remark that the question has a negative answer. Although this is not a surprising finding, we were not able to find a counterexample in the literature.

\begin{remark}
There is a closed equivalence relation $ E $ on a Polish space $ X $ and two points $ a, b \in X $ which are not $ E $-equivalent but have the property that any continuous mapping $ f : X \to Y $ into a Polish space $ Y $ with $ x E y \Rightarrow f(x) = f(y) $ satisfies $ f(a) = f(b) $. It follows that
\begin{itemize}
\item there is no continuous reduction to the relation of equality on a Polish space $ Y $ (although a Borel reduction necessarily exists),
\item the pseudometric $ \varrho_{E} $ given by $ \varrho_{E}(x, y) = 0 $ if $ x E y $ and $ \varrho_{E}(x, y) = 1 $ otherwise does not satisfy the assumption of Theorem~\ref{thmmain1}, as any continuous pseudometric $ \varphi $ with $ \varphi \leq \varrho_{E} $ satisfies $ \varphi(a, b) = 0 < 1 = \varrho_{E}(a, b) $.
\end{itemize}

Let us provide such an example. Let $ A $ be the subset of the Banach space $ \ell_{1} $ consisting of the point $ x = 0 $ and the points $ x_{n, m} = \frac{1}{m} e_{n}, n, m \in \mathbb{N} $. Let $ B $ be the set of real numbers consisting of the point $ p = 0 $ and the points $ p_{k} = \frac{1}{k}, k \in \mathbb{N} $. Further, let $ X $ be the space consisting of two copies of $ A $ and infinitely many copies of $ B $. More precisely,
$$ X = \big( \{ x^{(0)} \} \cup \{ x^{(0)}_{n, m} : n, m \in \mathbb{N} \} \big) \cup \big( \{ x^{(1)} \} \cup \{ x^{(1)}_{n, m} : n, m \in \mathbb{N} \} \big) \cup \bigcup_{m=1}^{\infty} \big( \{ p^{(m)} \} \cup \{ p^{(m)}_{k} : k \in \mathbb{N} \} \big). $$
The distance of two points from different copies of $ A $ or $ B $ can be defined as $ 10 $, say. Finally, we consider the equivalence relation $ E $ with the equivalence classes
$$ \{ x^{(0)} \}, \quad \{ x^{(1)} \}, \quad \{ p^{(m)} \}, m \in \mathbb{N}, \quad \{ x^{(0)}_{n, m}, p^{(m)}_{2n-1} \}, n, m \in \mathbb{N}, \quad \{ x^{(1)}_{n, m}, p^{(m)}_{2n} \}, n, m \in \mathbb{N}. $$

Let us check that $ E $ is closed. Suppose that $ a_{i} \to a $ and $ b_{i} \to b $ in $ X $ and that $ a_{i} E b_{i} $ for every $ i \in \mathbb{N} $. We need to check that $ a E b $. We may suppose that each $ a_{i} $ (each $ b_{i} $) belongs to the same copy of $ A $ or $ B $ as $ a $ (as $ b $). In the case that $ a $ and $ b $ belong to the same copy of $ A $ or $ B $, then $ a_{i} = b_{i} $ for $ i \in \mathbb{N} $, and so $ a = b $. In the opposite case, there are four possibilities, but the argument is the same in each of them. If, say, there is $ m $ such for every $ i $, we have $ a_{i} = x^{(0)}_{n(i), m} $ and $ b_{i} = p^{(m)}_{2n(i)-1} $ for some $ n(i) $, then there is $ n $ such that $ n(i) = n $ for all but finitely many $ i $'s (as the distance of $ x^{(0)}_{n_{1}, m} $ and $ x^{(0)}_{n_{2}, m} $ is always $ 2/m $ for $ n_{1} \neq n_{2} $). Hence, $ a = x^{(0)}_{n, m} $, $ b = p^{(m)}_{2n-1} $, and $ a E b $.

Now, we show that the choice $ a = x^{(0)}, b = x^{(1)} $ works. Let $ f : X \to Y $ be a continuous mapping into a Polish space $ Y $ such that $ x E y \Rightarrow f(x) = f(y) $. We show first that $ f(p^{(m)}) \to f(x^{(0)}) $. Let $ U $ be a neighborhood of $ f(x^{(0)}) $. There is $ m_{0} $ such that $ f(x) \in U $ whenever the distance of $ x $ to $ x^{(0)} $ is less than $ 1/m_{0} $. For all $ m > m_{0} $ and $ n \in \mathbb{N} $, we have $ f(p^{(m)}_{2n-1}) = f(x^{(0)}_{n, m}) \in U $. It follows that $ f(p^{(m)}) \in \overline{U} $ for all $ m > m_{0} $. As $ U $ was an arbitrary neighborhood of $ f(x^{(0)}) $, we finally arrive at $ f(p^{(m)}) \to f(x^{(0)}) $. By the same argument, we can obtain $ f(p^{(m)}) \to f(x^{(1)}) $. Therefore, $ f(x^{(0)}) = f(x^{(1)}) $.
\end{remark}

\begin{remark} \label{remarknonmeas}
Concerning the assumptions of Theorem~\refeq{thmmain3}, we can ask if the mapping $ p \mapsto H_{\varphi}(p) $ is Borel for a general lower semicontinuous pseudometric $ \varphi $. If we restrict our attention on pseudometrics attaining only values $ 0 $ and $ 1 $, we actually ask if the mapping $ x \mapsto [x]_{E} $ from $ X $ to $ F(X) $ is Borel for a general closed equivalence relation $ E $ on a Polish space $ X $. This question has a negative answer. We can consider any closed equivalence without a Borel selector, or we can consider the following example due to No\'e de Rancourt.

Let $ A $ be an analytic non-Borel set in a Polish space $ Y $. Then $ A $ is the projection of a closed subset $ B $ of $ Y \times \mathbb{N}^{\mathbb{N}} $. Let us add an isolated point $ p $ to $ \mathbb{N}^{\mathbb{N}} $ and consider the space $ X = B \cup (Y \times \{ p \}) $ and the equivalence $ (y_{1}, \nu_{1}) E (y_{2}, \nu_{2}) \Leftrightarrow y_{1} = y_{2} $. Then $ B $ is an open subset of $ X $ but the set $ \{ y \in Y : [(y, p)]_{E} \cap B \neq \emptyset \} = A $ is not Borel.
\end{remark}

It is known and easy to show that an equivalence relation $ E $ on a Polish space $ X $ is smooth if and only if there exist Borel subsets $ A_{n} $ of $ X $ such that $ xEy $ iff $ \{ n : x \in A_{n} \} = \{ n : y \in A_{n} \} $. For this reason, the following lemma can be seen as a continuous version of the above mentioned result from \cite{haloke} that every closed equivalence is smooth. Its proof is inspired by the proof of \cite[Lemma~5.1]{haloke} and a proof of Urysohn's lemma.

\begin{lemma} \label{lemmalscpseud}
Let $ d $ be a lower semicontinuous pseudometric on a Polish space $ X $. Then there are Borel functions $ s_{n} : X \to \mathbb{R}, n \in \mathbb{N}, $ such that
$$ d(p, q) = \sup_{n \in \mathbb{N}} |s_{n}(p) - s_{n}(q)|, \quad p, q \in X. $$
\end{lemma}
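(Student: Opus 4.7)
The plan is to adapt the classical proof of Urysohn's lemma to a Borel setting and combine it with the Borel-set extraction technique from the proof of \cite[Lemma~5.1]{haloke}, which is exactly what powers the smoothness of closed equivalence relations.

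After the routine reduction to $d \leq 1$ (replacing $d$ by $d/(1+d)$, which is again lsc), I would fix a countable basis $\{U_n\}_{n \in \mathbb{N}}$ of the ambient Polish topology of $X$. The core of the argument is to produce, for each $n$, a Borel function $s_n \colon X \to [0,1]$ that is $1$-Lipschitz with respect to $d$ and whose value at $p$ measures the $d$-distance from $p$ to a suitable Borel surrogate of $U_n$. Following the Urysohn recipe, I would build for each $n$ a dyadic chain of Borel sets $(B_{n, r})_{r \in \mathbb{Q} \cap [0, 1]}$ that is nested ($B_{n, r} \subseteq B_{n, r'}$ for $r \leq r'$) and $d$-separated in the sense that $d(p, q) \geq r' - r$ whenever $p \in B_{n, r}$ and $q \notin B_{n, r'}$. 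Setting $s_n(p) := \inf \{ r : p \in B_{n, r}\}$ with $\inf \emptyset = 1$ then produces, by the standard Urysohn computation, a Borel $1$-Lipschitz function. The identity $d(p, q) = \sup_n |s_n(p) - s_n(q)|$ follows: the inequality $\leq$ is automatic from $1$-Lipschitzness, and for $\geq$, given $\varepsilon > 0$, I would pick $n$ so that $U_n$ is $d$-close to $p$ at scale $\varepsilon$, forcing $s_n(p)$ to be small while $s_n(q) \geq d(p, q) - O(\varepsilon)$.

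The main obstacle is the Borel measurability of the sets $B_{n, r}$. The naive candidate $\{p : d(p, U_n) \leq r\}$ is in general only analytic, since $\inf_{x \in U_n} d(p, x)$ need not be Borel when the $d$-topology strictly refines the ambient topology; indeed, Remark~\ref{remarknonmeas} exhibits essentially the same failure at the level of closed equivalence relations. This is where the technique in the proof of \cite[Lemma~5.1]{haloke} enters: it extracts countably many Borel subsets of $X$ from a closed relation on $X \times X$ via an inductive refinement of the ambient Polish structure. I would apply it to the closed sub-level sets $\{(p, q) : d(p, q) \leq r\} \subseteq X \times X$ (closed by lsc of $d$) at each dyadic scale $r$, yielding the Borel surrogates $B_{n, r}$ enjoying the required nesting and $d$-separation, on top of which the Urysohn construction runs without issue.
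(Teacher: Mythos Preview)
Your overall strategy—Urysohn-style interpolation plus the Lusin-separation idea behind \cite[Lemma~5.1]{haloke}—is exactly the paper's, but there is a real gap in the execution: a Urysohn chain needs \emph{two} anchors, not one. With only the basic open set $U_n$ fixed at the bottom, the $d$-separation condition forces $B_{n,r}\supseteq (U_n)_r$ but imposes no upper constraint, so nothing prevents $B_{n,r}=X$ for every $r>0$, making $s_n$ identically zero. To keep the chain tight you would need $B_{n,r}\subseteq (U_n)_{r+\varepsilon}$, i.e., a Borel set sandwiched between two nested analytic sets; Lusin separation does not produce this (it separates two \emph{disjoint} analytic sets). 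Concretely, a basic open set for the ambient Polish topology may have arbitrarily large $d$-diameter, so a $U_n\ni p$ need not be $d$-far from $q$, and your claim ``$s_n(q)\ge d(p,q)-O(\varepsilon)$'' is unsupported. The paper's remedy is to index by \emph{pairs}: given $\eta>0$ and $p,q$ with $d(p,q)>\eta$, lower semicontinuity yields open $U\ni p$, $V\ni q$ with $d(x,y)>\eta$ on $U\times V$; one sets $B_0=U$, $B_1=X\setminus V$ and interpolates. Now at each stage both candidate sets $\bigcup_{u_k<u}(B_{u_k})_{\eta(u-u_k)}$ and $\bigcup_{u_k>u}(X\setminus B_{u_k})_{\eta(u_k-u)}$ are analytic and disjoint, so Lusin separation applies. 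A Lindel\"of argument over such triples $(U,V,\eta)$ with rational $\eta$ then yields the countable family.

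A smaller point: the reduction via $d/(1+d)$ does not recover the conclusion for $d$ itself—there is no way to pass from $\sup_n|s_n(p)-s_n(q)|=d(p,q)/(1+d(p,q))$ back to a sup-of-differences representation of $d$. The paper avoids any such reduction by working at each rational scale $\eta$ directly; if you want to bound $d$ first, use $\min(d,N)$ over $N\in\mathbb{N}$ instead.
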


\begin{proof}
For $ A \subseteq X $ and $ r > 0 $, let us denote
$$ (A)_{r} = \{ x \in X : d(x, A) < r \}, $$
where $ d(x, A) $ means the distance of $ x $ to $ A $ in the pseudometric space $ (X, d) $. It holds that
$$ \textrm{$ A $ is analytic} \quad \Rightarrow \quad \textrm{$ (A)_{r} $ is analytic}, $$
since $ (A)_{r} $ is a projection of the intersection of $ X \times A $ with the $ F_{\sigma} $ set $ \{ (x, y) \in X \times X : d(x, y) < r \} $.

Let us pick $ \eta > 0 $ and $ p, q \in X $ with $ d(p, q) > \eta $. We can choose open neighborhoods $ U \ni p $ and $ V \ni q $ such that $ d(x, y) > \eta $ for $ x \in U $ and $ y \in V $. By a recursive procedure, we will find for every rational number $ u \in [0, 1] $ a Borel set $ B_{u} $ in the way that
\begin{itemize}
\item $ B_{0} = U $, $ B_{1} = X \setminus V $,
\item $ B_{u} \subseteq B_{v} $ for $ u < v $,
\item $ d(B_{u}, X \setminus B_{v}) \geq \eta \cdot (v - u) $ for $ u < v $,
\end{itemize}
where $ d(B_{u}, X \setminus B_{v}) $ means the distance of these two sets in the pseudometric space $ (X, d) $. Note first that $ d(B_{0}, X \setminus B_{1}) = d(U, V) \geq \eta = \eta \cdot (1 - 0) $. Let $ u_{1} = 0, u_{2} = 1 $, and let the rational numbers in $ (0, 1) $ be enumerated by $ u_{3}, u_{4}, \dots $. Suppose that $ n \geq 2 $ and that $ B_{u_{1}}, \dots, B_{u_{n}} $ are already chosen. Let us consider	$$ C = \mathop{\bigcup_{1 \leq k \leq n}}_{u_{k} < u_{n+1}} (B_{u_{k}})_{\eta \cdot (u_{n+1}-u_{k})}, \quad D = \mathop{\bigcup_{1 \leq k \leq n}}_{u_{k} > u_{n+1}} (X \setminus B_{u_{k}})_{\eta \cdot (u_{k}-u_{n+1})}. $$
As $ C $ and $ D $ are disjoint analytic sets, using the Lusin separation principle (see e.g. \cite[Theorem~14.7]{kechris}), we can find a Borel set $ B_{u_{n+1}} $ which separates $ C $ from $ D $, that is, $ C \subseteq B_{u_{n+1}} \subseteq X \setminus D $. It is easy to check that the procedure works.

Once the sets $ B_{u} $ are found, we define
$$ s(x) = \eta \cdot \inf \big( \{ 1 \} \cup \{ u \in [0, 1] \cap \mathbb{Q} : x \in B_{u} \} \big), \quad x \in X. $$
It is easy to show that $ s $ is a Borel function and that $ s(y) - s(x) = \eta - 0 = \eta $ for $ x \in U $ and $ y \in V $. Let us check that $ |s(x) - s(y)| \leq d(x, y) $ for all $ x, y \in X $. We can suppose that $ s(x) > s(y) $. Let $ u, v $ be rational with $ s(y)/\eta < u < v < s(x)/\eta $. Then $ y \in B_{u} $ and $ x \notin B_{v} $, and so $ d(x, y) \geq d(B_{u}, X \setminus B_{v}) \geq \eta \cdot (v - u) $. Since $ u $ can be arbitrarily close to $ s(y)/\eta $ and $ v $ can be arbitrarily close to $ s(x)/\eta $, we obtain $ d(x, y) \geq s(x) - s(y) $.

Now, let $ \eta > 0 $ be fixed. The construction above provides a covering of the set $ \{ (p, q) \in X \times X : d(p, q) > \eta \} $ by open sets of the form $ U \times V $, for each of which there is a Borel function $ s : X \to \mathbb{R} $ such that $ |s(x) - s(y)| \leq d(x, y) $ for all $ x, y \in X $ and $ s(y) - s(x) = \eta $ for $ x \in U $ and $ y \in V $. Due to the Lindel\"{o}f property, there is a countable system $ S_{\eta} $ of such functions satisfying $ d(p, q) > \eta \Rightarrow \sup_{s \in S_{\eta}} |s(p) - s(q)| \geq \eta $.

Finally, the system $ S = \bigcup \{ S_{\eta} : \eta > 0, \eta \in \mathbb{Q} \} $ has the desired property $ d(p, q) = \sup_{s \in S} |s(p) - s(q)| $ for all $ p, q \in X $.
\end{proof}

In the next proposition, we give a positive answer to Question~\ref{question1} in the case that the group $ G $ is countable.

\begin{proposition} \label{propcountgrp}
Let $ G $ be a countable group acting on a Polish space $ X $, and let the action be Borel. Let $ d $ be a lower semicontinuous pseudometric on $ X $ such that $ d(x, y) = d(gx, gy) $ for any $ x, y \in X $ and $ g \in G $. Then $ \varrho_{G, d} $ is Borel-u.c.~reducible to the Gromov-Hausdorff distance $ \varrho_{GH} $.
\end{proposition}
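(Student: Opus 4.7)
The plan is to bootstrap Theorem~\ref{thmmain1} by changing the Polish topology on $X$ so that its hypothesis on a system $\Phi$ of continuous pseudometrics becomes available, using that $G$ is countable (so there are only countably many orbit maps to make continuous simultaneously).

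First I would apply Lemma~\ref{lemmalscpseud} to produce Borel functions $s_n : X \to \mathbb{R}$ with $d(p,q) = \sup_{n} |s_n(p) - s_n(q)|$, and set $\varphi_n(p,q) = |s_n(p) - s_n(q)|$. These $\varphi_n$ are Borel pseudometrics with $d = \sup_n \varphi_n$, but they need not be continuous for the original topology $\tau$ of $X$.

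Next I would equip $G$ with the discrete topology (which makes it a Polish group, because $G$ is countable) and refine $\tau$ to a finer Polish topology $\tau'$ on $X$, with the \emph{same} Borel $\sigma$-algebra, such that every $s_n$ and every orbit map $x \mapsto g \cdot x$ (for $g \in G$) is $\tau'$-continuous. This is a direct application of the standard topology-refinement machinery in \cite[Section~13]{kechris}: only countably many Borel functions need to be promoted to continuous ones (the $s_n$, together with all compositions $s_n \circ g$, or equivalently the preimages of a countable base under each $g \cdot$). In $(X, \tau')$ the pseudometrics $\varphi_n$ are continuous, $d = \sup_n \varphi_n$ is lower semicontinuous, $d$ is still $G$-invariant, and the $G$-action $G \times X \to X$ is jointly continuous (for discrete $G$ this reduces to continuity of each orbit map). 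Therefore the hypotheses of Theorem~\ref{thmmain1} hold for $(X, \tau')$, and the theorem delivers a $\tau'$-Borel map $f : X \to F(\mathbb{U}) \setminus \{\emptyset\}$ that is a Borel-u.c.~reduction of $\varrho_{G,d}$ to $\varrho_{GH}$.

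Finally, since $\tau$ and $\tau'$ have the same Borel $\sigma$-algebra, the map $f$ is automatically $\tau$-Borel. The uniform-continuity condition in the definition of Borel-u.c.~reducibility refers only to the two pseudometrics $\varrho_{G,d}$ and $\varrho_{GH}$, not to the topology of $X$, so $f$ witnesses Borel-u.c.~reducibility for $(X, \tau)$ as required. The main technical point is the simultaneous Polish refinement making all $s_n$ and all orbit maps continuous while preserving the Borel structure; countability of $G$ is what makes this step work, since otherwise uncountably many orbit maps would have to be rendered continuous in a single Polish topology.
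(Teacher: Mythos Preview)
Your approach is correct and takes a genuinely different route from the paper's. Both start from Lemma~\ref{lemmalscpseud} to get Borel functions $s_n$ with $d = \sup_n |s_n(\cdot) - s_n(\cdot)|$, and both finish with Theorem~\ref{thmmain1}; they differ in how the hypotheses of that theorem are arranged. The paper never refines the topology on $X$: instead it maps $X$ into $\mathbb{R}^{\mathbb{N}\times G}$ (equipped with the sup-metric $\sigma$ and the shift action $(h\cdot a)(n,g)=a(n,gh)$) via $f(x)(n,g)=s_n(gx)$, verifies by a one-line computation that this $f$ is a Borel \emph{isometric} reduction of $\varrho_{G,d}$ to $\varrho_{G,\sigma}$, and then applies Theorem~\ref{thmmain1} on $\mathbb{R}^{\mathbb{N}\times G}$, where the shift is obviously continuous and $\sigma$ is trivially a supremum of continuous coordinate pseudometrics. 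This buys a shorter bridge and the extra information that $\varrho_{G,d}$ embeds isometrically into a concrete shift orbit pseudometric; your route, by contrast, stays on $X$ and shows more transparently that the obstruction in Question~\ref{question1} is purely topological.

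One point in your argument deserves care: you need each orbit map $g\cdot$ to be continuous from $(X,\tau')$ into $(X,\tau')$, not merely into $(X,\tau)$, and a single pass of the refinement lemma (promoting the $s_n\circ g$ and the orbit maps with the \emph{old} target to continuity) does not automatically deliver this, since the target is being retopologised along with the source. The usual fix is to iterate---at stage $k+1$ make the $G$-translates of a countable base for $\tau_k$ open---and pass to the Polish topology generated by the union of the chain; this is routine and certainly within the machinery of \cite[Section~13]{kechris} you cite, but it is worth one explicit sentence.
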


\begin{proof}
Let $ \mathbb{R}^{\mathbb{N} \times G} $ be equipped with the distance $ \sigma(a, b) = \sup_{(n, g) \in \mathbb{N} \times G} |a(n, g) - b(n, g)| $ and with the action $ (h \cdot a)(n, g) = a(n, gh) $. By Theorem~\refeq{thmmain1}, the pseudometric $ \varrho_{G, \sigma} $ on $ \mathbb{R}^{\mathbb{N} \times G} $ is Borel-u.c.~reducible to $ \varrho_{GH} $, so it is sufficient to show that $ \varrho_{G, d} $ is Borel-u.c.~reducible to $ \varrho_{G, \sigma} $. In fact, there is a Borel-isometric reduction.
	
Indeed, if $ s_{n}, n \in \mathbb{N}, $ are as in Lemma~\ref{lemmalscpseud} and $ f : X \to \mathbb{R}^{\mathbb{N} \times G} $ is defined by $ f(x)(n, g) = s_{n}(gx) $, then we have
\begin{align*}
\varrho_{G, \sigma}(f(x) & , f(y)) = \inf_{h} \sigma(h \cdot f(x), f(y)) = \inf_{h} \sup_{n, g} \big| (h \cdot f(x))(n, g) - f(y)(n, g) \big| \\
 & = \inf_{h} \sup_{n, g} \big| f(x)(n, gh) - f(y)(n, g) \big| = \inf_{h} \sup_{n, g} \big| s_{n}(ghx) - s_{n}(gy) \big| \\
 & = \inf_{h} \sup_{g} d(ghx, gy) = \inf_{h} \sup_{g} d(hx, y) = \inf_{h} d(hx, y) = \varrho_{G, d}(x, y)
\end{align*}
for all $ x, y \in X $.
\end{proof}

Analogously as in Corollary~\ref{corzerodist}, we obtain in the setting of Proposition~\ref{propcountgrp} that the equivalence relation given by $ x E^{X}_{G, d} y \Leftrightarrow \varrho_{G, d}(x, y) = 0 $ is reducible to $ E_{GH} $, and so $ E_{1} $ is not reducible to it. Let us note that $ E^{X}_{G, d} $ is automatically Borel in such setting, so it possess much lower complexity than $ E_{GH} $. Despite of that, we still do not know if $ E^{X}_{G, d} $ is reducible to an orbit equivalence relation, even for the following simple example.

\begin{question} \label{question2}
Is the equivalence relation
$$ E^{[0, 1]^{\mathbb{Z}}}_{\mathbb{Z}, \sigma} = \big\{ (x, y) \in [0, 1]^{\mathbb{Z}} : \big( \forall \varepsilon > 0 \, \exists k \in \mathbb{Z} \, \forall l \in \mathbb{Z} : |x(l+k) - y(l)| < \varepsilon \big) \big\} $$
Borel reducible to an orbit equivalence relation?
\end{question}

It is conjectured that for any Borel equivalence relation $ E $, either $ E $ is reducible to an orbit equivalence relation, or $ E_{1} $ is reducible to $ E $, see \cite[Conjecture~1]{hjokech}. Thus, a negative answer to Question~\ref{question2} would be an unexpected and remarkable result.

\appendix

\section{Banach spaces setting}

The purpose of this appendix is to prove a version of Theorem~\ref{thmmain2} in the framework of Banach spaces. We need to recall some definitions first, starting with an analogue of the Urysohn space.

A separable Banach space $ \mathbb{G} $ is called a \emph{Gurariy space} if, for every $ \varepsilon > 0 $, every finite-dimensional Banach spaces $ X $ and $ Y $ with $ X \subseteq Y $ and every linear isometry $ f : X \to \mathbb{G} $, there exists an extension $ g : Y \to \mathbb{G} $ of $ f $ such that $ (1 + \varepsilon)^{-1} \Vert y \Vert \leq \Vert g(y) \Vert \leq (1 + \varepsilon) \Vert y \Vert $ for every $ y \in Y $. It is known that there exists only one Gurariy space up to isometry (\cite{lusky}, see also \cite{kubsol}). We will need the following analogue of Theorem~\ref{thmkatetov} which follows from \cite[Lemma~3.8]{benyaacov}.

\begin{theorem}[Ben Yaacov \cite{benyaacov}] \label{thmbenyaacov}
Let $ X $ be a separable Banach space. Then there exists a linear isometric embedding $ i : X \to \mathbb{G} $ such that any surjective linear isometry on $ i(X) $ can be extended to a surjective linear isometry on $ \mathbb{G} $.
\end{theorem}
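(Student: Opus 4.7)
The plan is to adapt the proof strategy of Katětov's Theorem~\ref{thmkatetov} to the Banach space setting, with Lemma~3.8 of \cite{benyaacov} playing the role of the homogeneity property of $\mathbb{U}$.

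In the first stage, I would build a tower $X = X_{0} \subseteq X_{1} \subseteq X_{2} \subseteq \ldots$ of separable Banach spaces, with linear isometric inclusions, designed so that every surjective linear isometry of $X_{n}$ extends to a surjective linear isometry of $X_{n+1}$. Such an $X_{n+1}$ is obtained by amalgamating $X_{n}$ with copies of itself indexed by a countable dense subset of the Polish group of surjective linear isometries of $X_{n}$, carried out as a pushout in the category of Banach spaces. The completion $Y$ of $\bigcup_{n} X_{n}$ is then a separable Banach space containing $X$ isometrically on which the group of surjective linear isometries of $X$ acts by surjective linear isometries, the extensions being unique by density.

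In the second stage, I would apply Lemma~3.8 of \cite{benyaacov} to produce a linear isometric embedding $j : Y \to \mathbb{G}$ such that every surjective linear isometry of $j(Y)$ extends to a surjective linear isometry of $\mathbb{G}$. Then $i := j|_{X}$ is the required embedding: a surjective linear isometry $T$ of $i(X)$ pulls back through $j$ to a surjective linear isometry of $X$, extends through the tower to a surjective linear isometry of $Y$, transports forward via $j$ to a surjective linear isometry of $j(Y)$, and finally extends by Lemma~3.8 to a surjective linear isometry of $\mathbb{G}$.

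The main obstacle is the amalgamation step in the first stage. The metric analogue uses Katětov's construction of sup-extensions of partial distance functions, which must preserve only the metric; in the Banach space category, amalgamating two copies of $X_{n}$ along an isometric identification must simultaneously preserve norms and linearity, so the relevant pushout is considerably more rigid. The continuous-logic Fraïssé machinery developed in \cite{benyaacov} handles this uniformly, with Lemma~3.8 there packaging exactly the homogeneity statement needed for the second stage, which is why the author defers to that reference rather than reproducing the construction.
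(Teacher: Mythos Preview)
The paper does not prove this theorem at all; it simply records that the statement ``follows from \cite[Lemma~3.8]{benyaacov}'' and moves on. There is nothing on the paper's side to compare your argument to beyond a bare citation.

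Your proposal, however, contains an internal inconsistency. In stage~2 you write that you would ``apply Lemma~3.8 of \cite{benyaacov} to produce a linear isometric embedding $j : Y \to \mathbb{G}$ such that every surjective linear isometry of $j(Y)$ extends to a surjective linear isometry of~$\mathbb{G}$.'' But that is exactly the statement of Theorem~\ref{thmbenyaacov} with $Y$ in place of~$X$. If Lemma~3.8 delivers this conclusion for an arbitrary separable Banach space, then apply it directly to~$X$ and you are done; the tower in stage~1 is superfluous, and the ``proof'' reduces to the one-line citation the paper gives. Conversely, if Lemma~3.8 is only the weaker ``homogeneity property of~$\mathbb{G}$'' you allude to in your opening sentence (something like approximate ultrahomogeneity), then your stage-2 invocation of it is illegitimate, since such a property does not by itself produce an embedding with the isometry-extension feature you claim. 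Either way the two-stage plan does not hold together: it is either redundant or has a gap at exactly the point where you invoke the cited lemma.

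As a secondary remark, the tower construction in stage~1 is sketched too loosely to be convincing on its own. Amalgamating over a countable \emph{dense} subset of the isometry group of~$X_{n}$ does not obviously yield extensions for \emph{all} isometries of~$X_{n}$, and the ``pushout in the category of Banach spaces'' you mention would need to be specified carefully (the usual $\ell_{1}$-sum modulo identifications) and shown to be isometric on each factor. These points could likely be repaired, but since the whole stage is unnecessary once you grant yourself the version of Lemma~3.8 you use in stage~2, the effort would be wasted.
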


We define
$$ \mathcal{SE}(\mathbb{G}) = \{ X \in F(\mathbb{G}) : \textnormal{$ X $ is linear} \}. $$
This is a closed subset of $ F(\mathbb{G}) $ (with the Wijsman topology), since it consists of elements $ X $ of $ F(\mathbb{G}) $ such that $ \mathrm{dist} (x+y, X) \leq \mathrm{dist} (x, X) + \mathrm{dist} (y, X) $ and $ \mathrm{dist} (\lambda x, X) = |\lambda| \mathrm{dist} (x, X) $ for $ x, y \in \mathbb{G} $ and $ \lambda \in \mathbb{R} $. Further, by $ Iso_{L}(\mathbb{G}) $ we denote the subgroup of $ Iso(\mathbb{G}) $ of all surjective linear isometries on $ \mathbb{G} $.

For two linear subspaces $ E, F $ of a normed linear space $ Z $, we define their modified Hausdorff distance by
$$ \tilde{\varrho}_{H}(E, F) = \varrho_{H}(B_{E}, B_{F}), $$
where $ B_{E} $ and $ B_{F} $ denote the closed unit balls of $ E $ and $ F $. The \emph{Kadets distance} of normed linear spaces $ X $ and $ Y $ is defined by
$$ \varrho_{K} (X, Y) = \inf_{\substack{\textnormal{$ Z $ normed linear space}\\ i_{X} : X \hookrightarrow Z\\ i_{Y}: Y \hookrightarrow Z}} \tilde{\varrho}_{H} \big( i_{X}(X), i_{Y}(Y) \big), $$
where the symbol $ \hookrightarrow $ denotes a linear isometric embedding.

Now, we can formulate the main result of this section. Of course, the distances $ \tilde{\varrho}_{H} $ and $ \varrho_{K} $ are regarded as pseudometrics on $ \mathcal{SE}(\mathbb{G}) $ here.

\begin{theorem} \label{thmmain4}
The Kadets distance $ \varrho_{K} $ is Borel-u.c.~bireducible with the orbit pseudometric $ \varrho_{Iso_{L}(\mathbb{G}), \tilde{\varrho}_{H}} $ on $ \mathcal{SE}(\mathbb{G}) $.
\end{theorem}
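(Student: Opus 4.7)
My plan is to establish the equality of pseudometrics
$$ \varrho_{K}(E, F) = \varrho_{Iso_{L}(\mathbb{G}), \tilde{\varrho}_{H}}(E, F) $$
on $ \mathcal{SE}(\mathbb{G}) $. Once this is shown, the Borel-u.c.\ (indeed Borel-isometric) bireducibility follows immediately by taking the identity map on $ \mathcal{SE}(\mathbb{G}) $ as the reduction in both directions; that this identity map is Borel-measurable with respect to the Wijsman topology is automatic.

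The inequality $ \varrho_{K} \leq \varrho_{Iso_{L}(\mathbb{G}), \tilde{\varrho}_{H}} $ is immediate from the definition of $ \varrho_{K} $: given any $ g \in Iso_{L}(\mathbb{G}) $, take the common host $ Z = \mathbb{G} $ with the linear isometric embeddings $ x \mapsto gx $ (for $ E $) and the inclusion (for $ F $), giving $ \varrho_{K}(E, F) \leq \tilde{\varrho}_{H}(gE, F) $; then infimize over $ g $.

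For the reverse inequality, suppose $ \varrho_{K}(E, F) < r $, realized by linear isometric embeddings $ i_{E} \colon E \hookrightarrow Z $ and $ i_{F} \colon F \hookrightarrow Z $ with $ \tilde{\varrho}_{H}(i_{E}(E), i_{F}(F)) < r $, where without loss of generality $ Z $ is a separable Banach space. By Theorem~\ref{thmbenyaacov}, pick a linear isometric embedding $ j \colon Z \to \mathbb{G} $. Now $ \mathbb{G} $ contains two linearly isometric embeddings of each of $ E $ and $ F $: the original inclusions and the compositions $ j \circ i_{E} $, $ j \circ i_{F} $. I would then invoke the approximate ultrahomogeneity of the Gurariy space: for every $ \varepsilon > 0 $ there exist $ T_{E}, T_{F} \in Iso_{L}(\mathbb{G}) $ with $ \| T_{E}(e) - j(i_{E}(e)) \| \leq \varepsilon $ for all $ e \in B_{E} $, and analogously for $ T_{F} $; this yields $ \tilde{\varrho}_{H}(T_{E}(E), j(i_{E}(E))) \leq \varepsilon $ and $ \tilde{\varrho}_{H}(T_{F}(F), j(i_{F}(F))) \leq \varepsilon $. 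Since $ j $ preserves all distances, $ \tilde{\varrho}_{H}(j(i_{E}(E)), j(i_{F}(F))) = \tilde{\varrho}_{H}(i_{E}(E), i_{F}(F)) < r $, and the triangle inequality for $ \tilde{\varrho}_{H} $ gives $ \tilde{\varrho}_{H}(T_{E}(E), T_{F}(F)) \leq r + 2\varepsilon $; using the $ Iso_{L}(\mathbb{G}) $-invariance of $ \tilde{\varrho}_{H} $, we obtain $ \varrho_{Iso_{L}(\mathbb{G}), \tilde{\varrho}_{H}}(E, F) \leq \tilde{\varrho}_{H}(T_{F}^{-1} T_{E}(E), F) \leq r + 2\varepsilon $. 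Letting $ \varepsilon \to 0 $ and then $ r \downarrow \varrho_{K}(E, F) $ closes the argument.

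The main obstacle is the precise formulation and proof of the version of approximate ultrahomogeneity of $ \mathbb{G} $ used above: for every separable Banach space $ X $, every pair of linear isometric embeddings $ i_{1}, i_{2} \colon X \to \mathbb{G} $, and every $ \varepsilon > 0 $, there exists $ T \in Iso_{L}(\mathbb{G}) $ with $ \| T(i_{1}(x)) - i_{2}(x) \| \leq \varepsilon $ for all $ x \in B_{X} $. This is a classical property of the Fra\"{\i}ss\'{e} limit $ \mathbb{G} $ but is not stated verbatim in the excerpt; a clean derivation should be possible from Theorem~\ref{thmbenyaacov} together with a back-and-forth argument along a dense chain of finite-dimensional subspaces of $ X $, invoking the finite-dimensional approximate-extension property that defines the Gurariy space at each step, with errors summable to $ \varepsilon $. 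This Fra\"{\i}ss\'{e}-theoretic routine, together with the careful setup of the embeddings $ i_{E}, i_{F}, j $, is what plausibly accounts for the "additional work" deferred to the appendix.
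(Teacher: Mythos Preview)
Your approach is correct and genuinely different from the paper's. You prove the stronger statement that the two pseudometrics actually coincide on $\mathcal{SE}(\mathbb{G})$, so the identity map is a Borel-isometric bireduction. The paper does not attempt this; instead it closes a cycle of reductions: it shows $\varrho_{Iso_{L}(\mathbb{G}),\tilde\varrho_{H}}$ is Borel-u.c.\ reducible to $\varrho_{GH}$ via Theorem~\ref{thmmain1} (using that $\tilde{\tilde\varrho}_{H}$ is a supremum of continuous pseudometrics and is bi-Lipschitz to $\tilde\varrho_{H}$), invokes the known bireducibility of $\varrho_{GH}$, $\varrho_{K}$ and $\varrho_{S_{\infty},\sigma}$, and then constructs by hand a reduction of $\varrho_{S_{\infty},\sigma}$ to $\varrho_{Iso_{L}(\mathbb{G}),\tilde\varrho_{H}}$, building an explicit family of renormings of $\ell_{2}$ inside a common separable space $W$ and applying Theorem~\ref{thmbenyaacov}. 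Your route is shorter and yields a sharper conclusion; the paper's route has the virtue of illustrating its own main theorem and of giving explicit Lipschitz constants along the chain.

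One small correction to your sketch: the separable approximate ultrahomogeneity of $\mathbb{G}$ you need (for any two linear isometric embeddings $i_{1},i_{2}\colon X\to\mathbb{G}$ and any $\varepsilon>0$ there is $T\in Iso_{L}(\mathbb{G})$ with $\sup_{x\in B_{X}}\|Ti_{1}(x)-i_{2}(x)\|\le\varepsilon$) is indeed a standard consequence of the Fra\"{\i}ss\'{e}-limit description of $\mathbb{G}$, but it is not naturally derived from Theorem~\ref{thmbenyaacov}. That theorem produces one special Kat\v{e}tov-type embedding, whereas here you must relate two \emph{arbitrary} embeddings. The clean derivation runs a back-and-forth directly from the defining almost-extension property of the Gurariy space (as in \cite{kubsol}), applied along an increasing chain of finite-dimensional subspaces of $X$ interleaved with a dense sequence in $\mathbb{G}$, with summable errors; Theorem~\ref{thmbenyaacov} is not needed at that point. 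Once this is in place, the rest of your argument (embedding the host $Z$ into $\mathbb{G}$, pulling back with $T_{E}$ and $T_{F}$, and applying the triangle inequality for $\tilde\varrho_{H}$) is correct as written.
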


Let us notice first that the pseudometric $ \varrho_{Iso_{L}(\mathbb{G}), \tilde{\varrho}_{H}} $ is reducible to $ \varrho_{GH} $. The action of $ Iso_{L}(\mathbb{G}) $ on $ \mathcal{SE}(\mathbb{G}) $ is continuous by Lemma~\ref{lemmawijsman}, and the pseudometric
$$ \tilde{\tilde{\varrho}}_{H}(E, F) = \sup_{x \in B_{\mathbb{G}}} \big| \mathrm{dist}(x, E) - \mathrm{dist}(x, F) \big|, \quad E, F \in \mathcal{SE}(\mathbb{G}), $$
satisfies the requirement from Theorem~\refeq{thmmain1}, hence the orbit pseudometric $ \varrho_{Iso_{L}(\mathbb{G}), \tilde{\tilde{\varrho}}_{H}} $ is reducible to $ \varrho_{GH} $. It remains to note that $ \tilde{\tilde{\varrho}}_{H} \leq 2 \tilde{\varrho}_{H} $ and $ \tilde{\varrho}_{H} \leq 2 \tilde{\tilde{\varrho}}_{H} $.

Further, we know from Theorem~\ref{thmcdk} that the pseudometrics $ \varrho_{GH} $ and $ \varrho_{S_{\infty}, \sigma} $ on $ [1/2, 1]^{[\mathbb{N}]^{2}} $ are bireducible. It is also known that both of them are bireducible with the Kadets distance $ \varrho_{K} $ (see \cite[Theorems~46 and~48]{cdk2}).

So, concerning the proof of Theorem~\ref{thmmain4}, it is sufficient to find a reduction of $ \varrho_{S_{\infty}, \sigma} $ to $ \varrho_{Iso_{L}(\mathbb{G}), \tilde{\varrho}_{H}} $. To find such a reduction, we use a construction from \cite[Section~4]{cdk2}. Let us fix numbers $ \alpha $ and $ \delta $ such that
$$ 1 < \alpha < \alpha + \delta \leq \frac{200}{199}. $$
For every $ f : [\mathbb{N}]^{2} \to [1/2, 1] $, we define an equivalent norm $ \Vert \cdot \Vert_{f} $ on $ \ell_{2} $ by
$$ \Vert x \Vert_{f} = \sup \Big( \{ \Vert x \Vert_{\ell_{2}} \} \cup \Big\{ \frac{1}{\sqrt{2}} \cdot \big( \alpha + \delta \cdot f(m, n) \big) \cdot |x_{m}+x_{n}| : \{ m, n \} \in [\mathbb{N}]^{2} \Big\} \Big) $$
for $ x = \{ x_{k} \}_{k=1}^{\infty} \in \ell_{2} $. It is easy to show that this is an equivalent norm indeed, as $ \Vert x \Vert_{\ell_{2}} \leq \Vert x \Vert_{f} \leq \frac{200}{199} \Vert x \Vert_{\ell_{2}} $ (see \cite{cdk2}).

The following result follows from \cite[Lemma~51]{cdk2} (for an explanation, see the proof of \cite[Theorem~48]{cdk2}, step (5)).

\begin{lemma}[\cite{cdk2}] \label{lemmacdk}
There is a constant $ C > 0 $ such that
$$ \varrho_{S_{\infty}, \sigma}(f, g) \leq C \varrho_{K} \big( (\ell_{2}, \Vert \cdot \Vert_{f}), (\ell_{2}, \Vert \cdot \Vert_{g}) \big), \quad f, g \in [1/2, 1]^{[\mathbb{N}]^{2}}. $$
\end{lemma}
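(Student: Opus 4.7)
The plan is to decode a permutation $ \pi \in S_{\infty} $ of $ \mathbb{N} $ from an approximate Kadets embedding between $ (\ell_{2}, \Vert \cdot \Vert_{f}) $ and $ (\ell_{2}, \Vert \cdot \Vert_{g}) $. The geometric hook is that the function $ f $ is encoded into $ \Vert \cdot \Vert_{f} $ via the identities $ \Vert e_{n} \Vert_{f} = 1 $ and $ \Vert e_{m} + e_{n} \Vert_{f} = \sqrt{2}(\alpha + \delta f(m, n)) $ for $ m \neq n $; both follow immediately from the definition of $ \Vert \cdot \Vert_{f} $, the bound $ \alpha + \delta \leq 200/199 $ (which makes $ \frac{1}{\sqrt{2}}(\alpha + \delta) < 1 $, so single-coordinate pair contributions never dominate the $ \ell_{2} $-norm), and the inequality $ \alpha > 1 $ (which makes the pair constraint at $ \{m, n\} $ dominate in $ \Vert e_{m} + e_{n} \Vert_{f} $).

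First, fix $ r > \varrho_{K}((\ell_{2}, \Vert \cdot \Vert_{f}), (\ell_{2}, \Vert \cdot \Vert_{g})) $ and pick a normed space $ Z $ together with linear isometric embeddings $ i, j $ of the two spaces into $ Z $ such that $ \varrho_{H}(B_{i(\ell_{2})}, B_{j(\ell_{2})}) < r $. Since $ \Vert e_{n} \Vert_{f} = 1 $, for each $ n $ there is $ y_{n} \in \ell_{2} $ with $ \Vert y_{n} \Vert_{g} \leq 1 $ and $ \Vert i(e_{n}) - j(y_{n}) \Vert_{Z} < r $. A routine triangle inequality in $ Z $, combined with linearity and the isometry of $ i, j $, yields
$$ \bigl| \Vert y_{m} + y_{n} \Vert_{g} - \sqrt{2}(\alpha + \delta f(m, n)) \bigr| \leq 2r, \qquad m \neq n. $$

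The central step is a rigidity argument. For $ r $ sufficiently small in terms of $ \alpha, \delta $, the resulting lower bound $ \Vert y_{m} + y_{n} \Vert_{g} \geq \sqrt{2}(\alpha + \delta/2) - 2r > \sqrt{2} \geq \Vert y_{m} + y_{n} \Vert_{\ell_{2}} $ forces the pair constraint (rather than the $ \ell_{2} $-term) to be binding in the definition of $ \Vert y_{m} + y_{n} \Vert_{g} $. Using this together with $ \Vert y_{n} \Vert_{\ell_{2}} \leq \Vert y_{n} \Vert_{g} \leq 1 $, one can show that each $ y_{n} $ must be close in $ \Vert \cdot \Vert_{g} $ to a basis vector $ e_{\pi(n)} $, and that $ n \mapsto \pi(n) $ is injective; repeating the argument with the roles of $ i, j $ swapped provides an injection in the reverse direction, so $ \pi $ can be taken to be a permutation. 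This quantitative basis-rigidity is precisely \cite[Lemma~51]{cdk2}, applied in the manner of step~(5) of the proof of \cite[Theorem~48]{cdk2}; the outcome is $ \Vert y_{n} - e_{\pi(n)} \Vert_{g} \leq C_{1} r $ for a constant $ C_{1} = C_{1}(\alpha, \delta) $.

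Substituting $ y_{n} \approx e_{\pi(n)} $ back into the displacement bound gives
$$ \bigl| \sqrt{2}(\alpha + \delta\, g(\pi(m), \pi(n))) - \sqrt{2}(\alpha + \delta f(m, n)) \bigr| \leq 2(1 + C_{1}) r, $$
hence $ | f(m, n) - g(\pi(m), \pi(n)) | \leq C r $ for $ C = \sqrt{2}(1 + C_{1})/\delta $, uniformly in $ \{ m, n \} $. Translating through the $ S_{\infty} $-action, this means $ \sigma(\pi \cdot f, g) \leq C r $, and letting $ r \searrow \varrho_{K} $ finishes the proof. The main obstacle is the rigidity step, where the delicate geometric design of $ \Vert \cdot \Vert_{f} $ — a small $ \ell_{2} $-dominated perturbation that nevertheless pinches the $ 2^{\mathbb{N}}/[\mathbb{N}]^{2} $ many directions $ e_{m} + e_{n} $ with distinguishable amounts — must be exploited to exclude vectors $ y_{n} $ spread over many coordinates; for this I defer to the cited lemma of \cite{cdk2}.
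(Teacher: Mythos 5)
Your proposal follows essentially the same route as the paper, which proves the lemma by directly citing \cite[Lemma~51]{cdk2} and step~(5) of the proof of \cite[Theorem~48]{cdk2}; you reconstruct that reduction and correctly defer the quantitative basis-rigidity to the same source. One caution in your motivating inequality chain: from $\Vert y_{m}\Vert_{\ell_{2}},\Vert y_{n}\Vert_{\ell_{2}}\le 1$ you only get $\Vert y_{m}+y_{n}\Vert_{\ell_{2}}\le 2$, not $\le\sqrt{2}$, so that display alone does not force the pair constraint to dominate the $\ell_{2}$-term---this is precisely what the cited lemma has to supply.
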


The next step uses a similar idea as the proof of $ (2) \Rightarrow (1) $ in \cite[Proposition~3.15]{cddk}. We define a subset of $ c_{00}([1/2, 1]^{[\mathbb{N}]^{2}} \times \mathbb{N}) $ by
\begin{align*}
\Omega = \mathrm{co} \bigg( & \bigcup_{f} \Big\{ \sum_{k} x_{k} e_{f, k} : x \in c_{00}(\mathbb{N}), \Vert x \Vert_{f} \leq 1 \Big\} \\
 & \cup \bigcup_{f, g} \Big\{ \sum_{k} x_{k} e_{f, k} - \sum_{k} x_{k} e_{g, k} : x \in c_{00}(\mathbb{N}), \delta \cdot \sigma(f, g) \cdot \Vert x \Vert_{\ell_{2}} \leq 1 \Big\} \\
 & \cup \bigcup_{f, g, \{ m, n \} } \Big\{ c \cdot \big( e_{f, m} + e_{f, n} - e_{g, m} - e_{g, n} \big) : |c| \cdot \delta \cdot |f(m, n) - g(m, n)| \cdot \sqrt{2} \leq 1 \Big\} \bigg),
\end{align*}
where $ \{ e_{f, k} \}_{f, k} $ stands for the canonical basis of $ c_{00}([1/2, 1]^{[\mathbb{N}]^{2}} \times \mathbb{N}) $. Let us denote the corresponding Minkowski functional by $ \gamma $. Before we prove several properties of $ \Omega $ and $ \gamma $, let us note that it follows immediately from the definition that
\begin{itemize}
\item $ \gamma(\sum_{k} x_{k} e_{f, k}) \leq \Vert x \Vert_{f} $,
\item $ \gamma(\sum_{k} x_{k} e_{f, k} - \sum_{k} x_{k} e_{g, k}) \leq \delta \cdot \sigma(f, g) \cdot \Vert x \Vert_{\ell_{2}} $,
\item $ \gamma(e_{f, m} + e_{f, n} - e_{g, m} - e_{g, n}) \leq \delta \cdot |f(m, n) - g(m, n)| \cdot \sqrt{2} $.
\end{itemize}

\begin{claim} \label{clapp1}
We have $ \gamma(\sum_{k} x_{k} e_{f, k}) = \Vert x \Vert_{f} $ for $ f \in [1/2, 1]^{[\mathbb{N}]^{2}} $ and $ x = \{ x_{k} \}_{k} \in c_{00}(\mathbb{N}) $.
\end{claim}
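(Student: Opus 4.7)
The inequality $\gamma\bigl(\sum_k x_k e_{f,k}\bigr) \leq \|x\|_f$ is immediate from the first bulleted bound noted before the claim, since $(1/\|x\|_f)\sum_k x_k e_{f,k}$ is one of the type-1 generators of $\Omega$. For the reverse inequality, the plan is to invoke Hahn--Banach duality in the form
\[
\gamma(v) = \sup\{\Phi(v) : \Phi \text{ linear on } c_{00}([1/2,1]^{[\mathbb{N}]^2} \times \mathbb{N}),\ \Phi \leq 1 \text{ on } \Omega\}.
\]
A candidate $\Phi$ is conveniently encoded by its slices $\xi_g(y) \coloneqq \Phi\bigl(\sum_k y_k e_{g,k}\bigr)$, and the three types of generators translate into the requirements that (i) $\|\xi_g\|_{g^*} \leq 1$ for every $g$; (ii) $\|\xi_{f'} - \xi_{g'}\|_{\ell_2^*} \leq \delta\,\sigma(f',g')$ for all $f', g'$; and (iii) $|(\xi_{f'} - \xi_{g'})(e_{m'} + e_{n'})| \leq \delta\sqrt{2}\,|f'(m',n') - g'(m',n')|$ for every pair $\{m',n'\}$.

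The task then reduces to constructing $(\xi_g)_g$ satisfying (i)--(iii) with $\xi_f(x) = \|x\|_f$, and I would split into cases according to which term in the definition of $\|x\|_f$ attains (or $\varepsilon$-approximates) the supremum. In the first case, $\|x\|_f = \|x\|_{\ell_2}$, and I would take $\xi_g \equiv x/\|x\|_{\ell_2}$, independent of $g$. Then all differences $\xi_{f'} - \xi_{g'}$ vanish, so (ii) and (iii) hold trivially, while (i) follows from $\|\cdot\|_{\ell_2} \leq \|\cdot\|_g$. In the second case, the supremum is realized by $\frac{\alpha + \delta f(m,n)}{\sqrt{2}}|x_m + x_n|$ for some specific pair $\{m,n\}$, and I would take $\xi_g = \frac{\varepsilon_0(\alpha + \delta g(m,n))}{\sqrt{2}}(e_m^* + e_n^*)$, where $\varepsilon_0 = \mathrm{sgn}(x_m + x_n)$. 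Then $\xi_f(x) = \|x\|_f$ by construction, (i) reduces to the $\{m,n\}$-term in the definition of $\|y\|_g$, and (ii) holds because the difference is supported on $\{m,n\}$ with $\ell_2^*$-norm exactly $\delta|f'(m,n) - g'(m,n)| \leq \delta\sigma(f',g')$.

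The main obstacle lies in verifying (iii) at pairs $\{m',n'\}$ overlapping $\{m,n\}$ in exactly one index. For the naive second-case choice the left-hand side of (iii) equals $\frac{\delta}{\sqrt{2}}|f'(m,n) - g'(m,n)|$, while the right-hand side involves the a priori unrelated quantity $|f'(m',n') - g'(m',n')|$. I expect this is resolved either by exploiting the fact that the type-3 generator degenerates when $f'(m',n') = g'(m',n')$ (making the constraint vacuous for those triples) together with (ii) to handle the remaining single-overlap cases, or by an appropriately refined choice of $\xi_g$ whose coordinates at $m$ and $n$ are redistributed so as to compensate at overlapping pairs. Once (i)--(iii) are verified, the duality formula immediately yields $\gamma(v) \geq \|x\|_f$, completing the proof.
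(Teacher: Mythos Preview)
Your approach is the paper's approach: Hahn--Banach duality, the same case split, and in each case exactly the functional the paper chooses. Your $\xi_g \equiv x/\|x\|_{\ell_2}$ is the paper's $u^*(e_{f,k}) = y_k$ in part (i), and your $\xi_g = \tfrac{\varepsilon_0(\alpha+\delta g(m,n))}{\sqrt2}(e_m^*+e_n^*)$ is the paper's $u^*(e_{f,m}) = u^*(e_{f,n}) = \tfrac{1}{\sqrt2}(\alpha+\delta f(m,n))$, $u^*(e_{f,k})=0$ otherwise, in part (ii). Your verifications of (i) and (ii) match the paper's first two bullets in each part.

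Where you stop and flag the single-overlap case of constraint (iii) as an obstacle, the paper simply writes, for a type-3 generator $c\,(e_{f,m'}+e_{f,n'}-e_{g,m'}-e_{g,n'})$,
\[
u^*(\,\cdot\,)=c\cdot\tfrac{1}{\sqrt2}\cdot\delta\cdot(f(m,n)-g(m,n))\,\bigl|\{m',n'\}\cap\{m,n\}\bigr|\ \le\ |c|\cdot\delta\cdot|f(m,n)-g(m,n)|\cdot\sqrt2\ \le\ 1,
\]
and moves on. Note, however, that the membership constraint for that generator is $|c|\,\delta\,|f(m',n')-g(m',n')|\,\sqrt2\le 1$, involving the \emph{generator's} pair $\{m',n'\}$, not the functional's fixed pair $\{m,n\}$. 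In exactly the situation you isolate---say $m'=m$, $n'\notin\{m,n\}$, and $f(m',n')=g(m',n')$ while $f(m,n)\neq g(m,n)$---the constraint allows $|c|$ to be arbitrarily large, and the displayed chain breaks down: $u^*$ is then unbounded on $\Omega$. So your diagnosis is correct, and the paper's written line does not in fact dispose of the single-overlap case either. Your proposal therefore has the same gap as the paper's printed argument; neither of the two fixes you sketch (falling back on (ii), or redistributing the coordinates of $\xi_g$) is carried out, and neither is obviously adequate as stated.
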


\begin{proof}
For notational purposes, we change the notation $ f $ and $ x $ in the claim to $ h $ and $ y $. We already know that $ \gamma(\sum_{k} y_{k} e_{h, k}) \leq \Vert y \Vert_{h} $. The verification of the opposite inequality consists of two tasks. Let us note first that any linear functional $ u^{*} $ on $ c_{00}([1/2, 1]^{[\mathbb{N}]^{2}} \times \mathbb{N}) $ for which $ u^{*} \leq 1 $ on $ \Omega $ satisfies $ u^{*} \leq \gamma $.
	
(i) We check that $ \gamma(\sum_{k} y_{k} e_{h, k}) \geq \Vert y \Vert_{\ell_{2}} $. We can suppose that $ \Vert y \Vert_{\ell_{2}} = 1 $. Let us consider $ u^{*} \in c_{00}([1/2, 1]^{[\mathbb{N}]^{2}} \times \mathbb{N})^{\#} $ given by $ u^{*}(e_{f, k}) = y_{k} $. Then $ u^{*} \leq 1 $ on $ \Omega $, since
\begin{itemize}
\item $ u^{*}(\sum_{k} x_{k} e_{f, k}) = \sum_{k} x_{k} u^{*}(e_{f, k}) = \sum_{k} x_{k} y_{k} \leq \Vert x \Vert_{\ell_{2}} \Vert y \Vert_{\ell_{2}} = \Vert x \Vert_{\ell_{2}} \leq \Vert x \Vert_{f} \leq 1 $ whenever $ \Vert x \Vert_{f} \leq 1 $,
\item $ u^{*}(\sum_{k} x_{k} e_{f, k} - \sum_{k} x_{k} e_{g, k}) = \sum_{k} x_{k} y_{k} - \sum_{k} x_{k} y_{k} = 0 \leq 1 $,
\item $ u^{*}(c \cdot (e_{f, m} + e_{f, n} - e_{g, m} - e_{g, n})) = c \cdot (y_{m} + y_{n} - y_{m} - y_{n}) = 0 \leq 1 $.
\end{itemize}
It follows that $ \gamma(\sum_{k} y_{k} e_{h, k}) \geq u^{*}(\sum_{k} y_{k} e_{h, k}) = \sum_{k} y_{k} u^{*}(e_{h, k}) = \sum_{k} y_{k}^{2} = \Vert y \Vert_{\ell_{2}}^{2} = \Vert y \Vert_{\ell_{2}} $.
	
(ii) We check that $ \gamma(\sum_{k} y_{k} e_{h, k}) \geq \frac{1}{\sqrt{2}} \cdot (\alpha + \delta \cdot h(m, n)) \cdot |y_{m}+y_{n}| $ for any $ \{ m, n \} \in [\mathbb{N}]^{2} $. We can suppose that $ y_{m} + y_{n} \geq 0 $. Let us consider $ u^{*} \in c_{00}([1/2, 1]^{[\mathbb{N}]^{2}} \times \mathbb{N})^{\#} $ given by $ u^{*}(e_{f, m}) = u^{*}(e_{f, n}) = \frac{1}{\sqrt{2}} \cdot (\alpha + \delta \cdot f(m, n)) $ and $ u^{*}(e_{f, k}) = 0 $ for $ k \notin \{ m, n \} $. Then $ u^{*} \leq 1 $ on $ \Omega $, since the points in the formula for $ \Omega $ satisfy
\begin{itemize}
\item $ u^{*}(\sum_{k} x_{k} e_{f, k}) = \sum_{k} x_{k} u^{*}(e_{f, k}) = \frac{1}{\sqrt{2}} \cdot (\alpha + \delta \cdot f(m, n)) (x_{m} + x_{n}) \leq \Vert x \Vert_{f} \leq 1 $,
\item $ u^{*}(\sum_{k} x_{k} e_{f, k} - \sum_{k} x_{k} e_{g, k}) = \frac{1}{\sqrt{2}} \cdot (\alpha + \delta \cdot f(m, n)) (x_{m} + x_{n}) - \frac{1}{\sqrt{2}} \cdot (\alpha + \delta \cdot g(m, n)) (x_{m} + x_{n}) = \frac{1}{\sqrt{2}} \cdot \delta \cdot (f(m, n) - g(m, n)) (x_{m} + x_{n}) \leq \delta \cdot \sigma(f, g) \cdot \Vert x \Vert_{\ell_{2}} \leq 1 $,
\item $ u^{*}(c \cdot (e_{f, m'} + e_{f, n'} - e_{g, m'} - e_{g, n'})) = c \cdot \frac{1}{\sqrt{2}} \cdot \delta \cdot (f(m, n) - g(m, n)) | \{ m', n' \} \cap \{ m, n \} | \leq |c| \cdot \delta \cdot |f(m, n) - g(m, n)| \cdot \sqrt{2} \leq 1 $.
\end{itemize}
It follows that $ \gamma(\sum_{k} y_{k} e_{h, k}) \geq u^{*}(\sum_{k} y_{k} e_{h, k}) = \sum_{k} y_{k} u^{*}(e_{h, k}) = \frac{1}{\sqrt{2}} \cdot (\alpha + \delta \cdot h(m, n)) \cdot (y_{m} + y_{n}) = \frac{1}{\sqrt{2}} \cdot (\alpha + \delta \cdot h(m, n)) \cdot |y_{m}+y_{n}| $.
\end{proof}

\begin{claim} \label{clapp2}
For $ \pi \in S_{\infty} $, the linear operator $ T_{\pi} $ given by $ e_{f, k} \mapsto e_{\pi \cdot f, \pi(k)} $ satisfies $ T_{\pi}(\Omega) = \Omega $, and so it is an isometry on $ (c_{00}([1/2, 1]^{[\mathbb{N}]^{2}} \times \mathbb{N}), \gamma) $.
\end{claim}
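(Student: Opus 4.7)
The strategy is to show that $T_\pi$ permutes each of the three families of vectors whose convex hull defines $\Omega$. Since $T_\pi$ is linear, hence preserves convex combinations, this will give $T_\pi(\Omega) \subseteq \Omega$, and applying the same argument to $T_{\pi^{-1}} = T_\pi^{-1}$ will yield the reverse inclusion. Once $T_\pi(\Omega) = \Omega$ is established, the fact that $T_\pi$ is an isometry for the Minkowski functional $\gamma$ is immediate from the definition of $\gamma$.

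The underlying computation relies on the identity $(\pi \cdot f)(\pi(m), \pi(n)) = f(m,n)$, which follows directly from the definition of the $S_\infty$-action on $[1/2, 1]^{[\mathbb{N}]^{2}}$. Writing $T_\pi(\sum_k x_k e_{f,k}) = \sum_j y_j e_{\pi \cdot f, j}$ with $y_j = x_{\pi^{-1}(j)}$, one observes that $\Vert y \Vert_{\ell_{2}} = \Vert x \Vert_{\ell_{2}}$ and, term by term in the supremum defining $\Vert \cdot \Vert_{\pi \cdot f}$, that $\Vert y \Vert_{\pi \cdot f} = \Vert x \Vert_{f}$; this handles type~1 generators. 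For type~2, the same relabelling combined with $\sigma(\pi \cdot f, \pi \cdot g) = \sigma(f,g)$ shows that the constraint $\delta \cdot \sigma(f,g) \cdot \Vert x \Vert_{\ell_{2}} \leq 1$ is preserved. For type~3, the image of $c(e_{f,m} + e_{f,n} - e_{g,m} - e_{g,n})$ under $T_\pi$ is $c(e_{\pi \cdot f, m'} + e_{\pi \cdot f, n'} - e_{\pi \cdot g, m'} - e_{\pi \cdot g, n'})$ with $\{m', n'\} = \{\pi(m), \pi(n)\}$, and since $(\pi \cdot f)(m', n') - (\pi \cdot g)(m', n') = f(m,n) - g(m,n)$, the coefficient bound $|c| \cdot \delta \cdot |f(m,n) - g(m,n)| \cdot \sqrt{2} \leq 1$ is unchanged.

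Combining these three checks (and the analogous ones obtained by replacing $\pi$ with $\pi^{-1}$) gives $T_\pi(\Omega) = \Omega$. As $T_\pi$ is a linear bijection of $c_{00}([1/2,1]^{[\mathbb{N}]^{2}} \times \mathbb{N})$ preserving the (symmetric, absorbing) convex set $\Omega$, the Minkowski functional satisfies $\gamma(T_\pi v) = \gamma(v)$ for every $v$, which yields the claim. I do not foresee any genuine obstacle: the entire content of the proof is the bookkeeping observation that the $S_\infty$-action on $[1/2,1]^{[\mathbb{N}]^{2}}$ was defined precisely so that permuting the second parameter $f$ cancels the permutation $k \mapsto \pi(k)$ of basis indices in each of the three generating families.
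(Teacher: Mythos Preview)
Your proposal is correct and follows exactly the same approach as the paper's proof: the paper simply states that it is straightforward to check that $T_\pi$ maps each of the three generating families of $\Omega$ onto itself, which is precisely what you verify in detail. Your write-up just makes explicit the key identity $(\pi\cdot f)(\pi(m),\pi(n))=f(m,n)$ and the bookkeeping that the paper leaves to the reader.
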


\begin{proof}
It is straightforward to check that $ T_{\pi} $ maps the set of the points $ \sum_{k} x_{k} e_{f, k}, f \in [1/2, 1]^{[\mathbb{N}]^{2}}, x \in c_{00}(\mathbb{N}), \Vert x \Vert_{f} \leq 1, $ onto itself. The same holds for the other two collections of points in the definition of $ \Omega $.
\end{proof}

\begin{claim} \label{clapp3}
Let $ f, g \in [1/2, 1]^{[\mathbb{N}]^{2}} $ and let $ u \in \mathrm{span} \{ e_{f, k} : k \in \mathbb{N} \} $ satisfy $ \gamma(u) \leq 1 $. Then there is $ v \in \mathrm{span} \{ e_{g, k} : k \in \mathbb{N} \} $ satisfying $ \gamma(v) \leq 1 $ such that $ \gamma(v - u) \leq 2 \delta \cdot \sigma(f, g) $.
\end{claim}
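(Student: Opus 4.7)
My plan is to construct $v$ by taking the obvious candidate obtained from $u$ by switching the basis vectors from the $f$-slice to the $g$-slice, and then rescaling just enough to ensure $\gamma(v) \le 1$. By Claim~\ref{clapp1}, we may write $u = \sum_k x_k e_{f,k}$ with $\|x\|_f = \gamma(u) \leq 1$, and the candidate to consider is $v_0 = \sum_k x_k e_{g,k}$. From the second bullet listed right after the definition of $\Omega$, we immediately get
$$ \gamma(v_0 - u) \leq \delta \cdot \sigma(f, g) \cdot \|x\|_{\ell_2} \leq \delta \cdot \sigma(f, g). $$
So the difference is already under control; the only issue is that $\gamma(v_0) = \|x\|_g$ might exceed $1$.

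The key estimate I will establish is the comparison
$$ \|x\|_g \leq \bigl(1 + \delta \cdot \sigma(f, g)\bigr) \|x\|_f. $$
This follows termwise from the definition of $\|\cdot\|_g$: for the $\ell_2$-term there is nothing to prove since $\|x\|_{\ell_2} \leq \|x\|_f$, while for any $\{m, n\} \in [\mathbb{N}]^{2}$ one writes
$$ \frac{1}{\sqrt{2}} \bigl(\alpha + \delta g(m,n)\bigr) |x_m + x_n| \leq \frac{1}{\sqrt{2}} \bigl(\alpha + \delta f(m,n)\bigr) |x_m + x_n| + \frac{\delta}{\sqrt{2}} \sigma(f,g) \cdot |x_m + x_n|, $$
and applies $|x_m + x_n| \leq \sqrt{2} \|x\|_{\ell_2} \leq \sqrt{2} \|x\|_f$ to bound the second summand by $\delta \sigma(f,g) \|x\|_f$.

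With this comparison in hand, I will set $v = (1 + \delta \sigma(f,g))^{-1} v_0$. Then $\gamma(v) = (1 + \delta\sigma(f,g))^{-1} \|x\|_g \leq \|x\|_f \leq 1$, as required. The triangle inequality gives
$$ \gamma(v - u) \leq \gamma(v - v_0) + \gamma(v_0 - u) \leq \frac{\delta\sigma(f,g)}{1 + \delta\sigma(f,g)} \|x\|_g + \delta\sigma(f,g)\|x\|_{\ell_2}, $$
and plugging in the comparison estimate $\|x\|_g \leq (1+\delta\sigma(f,g))\|x\|_f$ together with $\|x\|_{\ell_2} \leq \|x\|_f \leq 1$ bounds this by $\delta\sigma(f,g)\|x\|_f + \delta\sigma(f,g)\|x\|_f \leq 2\delta\sigma(f,g)$.

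No step looks like a serious obstacle: the substantive content is the norm comparison, which is immediate once one uses $|x_m + x_n| \leq \sqrt{2}\|x\|_{\ell_2}$; everything else is a direct application of the bulleted inequalities for $\gamma$ listed just before the claim together with the triangle inequality.
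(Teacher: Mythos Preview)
Your proof is correct and follows essentially the same approach as the paper: write $u=\sum_k x_k e_{f,k}$, set the candidate $v_0=\sum_k x_k e_{g,k}$ (the paper's $u'$), and take $v=(1+\delta\sigma(f,g))^{-1}v_0$. The only difference is that you establish the comparison $\|x\|_g\le(1+\delta\sigma(f,g))\|x\|_f$ directly from the definition of the norms, whereas the paper obtains the equivalent bound $\gamma(u')\le 1+\delta\sigma(f,g)$ in one line from the triangle inequality $\gamma(u')\le\gamma(u)+\gamma(u-u')$; this is a slightly shorter path to the same conclusion.
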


\begin{proof}
Let $ u $ be expressed by $ u = \sum_{k} x_{k} e_{f, k} $, where $ x = \{ x_{k} \}_{k} \in c_{00}(\mathbb{N}) $, and let $ u' = \sum_{k} x_{k} e_{g, k} $. We know that $ \gamma(u - u') = \gamma(\sum_{k} x_{k} e_{f, k} - \sum_{k} x_{k} e_{g, k}) \leq \delta \cdot \sigma(f, g) \cdot \Vert x \Vert_{\ell_{2}} $. By Claim~\ref{clapp1}, we have $ \Vert x \Vert_{\ell_{2}} \leq \Vert x \Vert_{f} = \gamma(u) \leq 1 $, and so
$$ \gamma(u - u') \leq \delta \cdot \sigma(f, g), $$
in particular $ \gamma(u') \leq 1 + \delta \cdot \sigma(f, g) $. Choosing $ v = \frac{1}{1 + \delta \cdot \sigma(f, g)} u' $, we arrive at $ \gamma(v) \leq 1 $ and $ \gamma(v - u) \leq \gamma(u' - v) + \gamma(u - u') = \gamma(\delta \cdot \sigma(f, g) \cdot v) + \gamma(u - u') \leq 2 \delta \cdot \sigma(f, g) $.
\end{proof}

\begin{claim} \label{clapp4}
The mapping $ f \mapsto e_{f, k} $ is continuous for every $ k $ (in the sense that $ \gamma(e_{g, k} - e_{f, k}) \to 0 $ as $ g \to f $ in $ [1/2, 1]^{[\mathbb{N}]^{2}} $).
\end{claim}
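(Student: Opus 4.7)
The plan is to express $2(e_{f,k} - e_{g,k})$ as a sum of three vectors of the form appearing in the third generating family of $\Omega$ (possibly with the roles of $f$ and $g$ swapped), so that only finitely many coordinates of $f$ and $g$ enter the resulting estimate.

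The key algebraic identity, valid for any $m, n \in \mathbb{N}$ with $k, m, n$ pairwise distinct, is
$$ 2(e_{f,k} - e_{g,k}) = (e_{f,k} + e_{f,m} - e_{g,k} - e_{g,m}) + (e_{f,k} + e_{f,n} - e_{g,k} - e_{g,n}) + (e_{g,m} + e_{g,n} - e_{f,m} - e_{f,n}), $$
which is checked by a direct expansion. Each summand belongs to the third generating family of $\Omega$ (the last with $f$ and $g$ interchanged). Applying the triangle inequality for $\gamma$ (which is subadditive since $\Omega$ is convex and absorbing) together with the third noted bound on each summand, I will obtain
$$ \gamma(e_{f,k} - e_{g,k}) \leq \frac{\delta\sqrt{2}}{2}\,\bigl(|f(k,m)-g(k,m)| + |f(k,n)-g(k,n)| + |f(m,n)-g(m,n)|\bigr). $$

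To finish, I will fix any such pair $m, n$ and note that the product topology on $[1/2,1]^{[\mathbb{N}]^{2}}$ is the topology of pointwise convergence, so each of the three differences on the right tends to $0$ as $g \to f$. This gives $\gamma(e_{g,k} - e_{f,k}) \to 0$, as claimed. I expect no real obstacle in this argument; the entire content is the algebraic identity that decomposes a ``diagonal'' difference $e_{f,k} - e_{g,k}$ into three ``off-diagonal'' differences of the generating form, each of which is controlled by a single scalar $|f(i,j) - g(i,j)|$.
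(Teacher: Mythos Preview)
Your proposal is correct and takes essentially the same approach as the paper: the paper writes $e_{f,k} = \tfrac{1}{2}\bigl[(e_{f,k}+e_{f,k+1}) + (e_{f,k}+e_{f,k+2}) - (e_{f,k+1}+e_{f,k+2})\bigr]$, which upon subtracting the analogous expression for $e_{g,k}$ yields precisely your identity with the specific choice $m=k+1$, $n=k+2$. Your version with general distinct $m,n$ and the explicit bound $\gamma(e_{f,k}-e_{g,k}) \leq \tfrac{\delta\sqrt{2}}{2}\bigl(|f(k,m)-g(k,m)|+|f(k,n)-g(k,n)|+|f(m,n)-g(m,n)|\bigr)$ is a slight sharpening of presentation, but the underlying decomposition is the same.
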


\begin{proof}
For all $ \{ m, n \} \in [\mathbb{N}]^{2} $, the mapping $ f \mapsto e_{f, m} + e_{f, n} $ is continuous, as $ \gamma(e_{f, m} + e_{f, n} - e_{g, m} - e_{g, n}) \leq \delta \cdot |f(m, n) - g(m, n)| \cdot \sqrt{2} $ for each $ f $ and $ g $. It remains to note that $ e_{f, k} = \frac{1}{2} [(e_{f, k} + e_{f, k+1}) + (e_{f, k} + e_{f, k+2}) - (e_{f, k+1} + e_{f, k+2})] $ for each $ f $.
\end{proof}

Now, let $ W $ be the completion of the quotient $ c_{00}([1/2, 1]^{[\mathbb{N}]^{2}} \times \mathbb{N}) / M $, where $ M = \{ u \in c_{00}([1/2, 1]^{[\mathbb{N}]^{2}} \times \mathbb{N}) : \gamma(u) = 0 \} $. Let $ Q : c_{00}([1/2, 1]^{[\mathbb{N}]^{2}} \times \mathbb{N}) \to W $ denote the quotient mapping. Define
$$ W_{f} = \overline{\mathrm{span}} \{ Qe_{f, k} : k \in \mathbb{N} \}, \quad f \in [1/2, 1]^{[\mathbb{N}]^{2}}. $$
We see that
\begin{itemize}
\item $ W_{f} $ is isometric to $ (\ell_{2}, \Vert \cdot \Vert_{f}) $ (Claim~\ref{clapp1}),
\item for $ f \in [1/2, 1]^{[\mathbb{N}]^{2}} $ and $ \pi \in S_{\infty} $, there is a surjective linear isometry on $ W $ which maps $ W_{f} $ onto $  W_{\pi \cdot f} $ (Claim~\ref{clapp2}),
\item $ \tilde{\varrho}_{H}(W_{f}, W_{g}) \leq 2 \delta \cdot \sigma(f, g) $ (Claim~\ref{clapp3}),
\item the mapping $ f \mapsto Qe_{f, k} $ is continuous for every $ k \in \mathbb{N} $ (Claim~\ref{clapp4}).
\end{itemize}

From the last property, we obtain that $ W $ is separable. By Theorem~\ref{thmbenyaacov}, we can suppose that $ W $ is a subspace of $ \mathbb{G} $ and that any surjective linear isometry on $ W $ can be extended to a surjective linear isometry on $ \mathbb{G} $. So, for $ f \in [1/2, 1]^{[\mathbb{N}]^{2}} $ and $ \pi \in S_{\infty} $, there is a surjective linear isometry on $ \mathbb{G} $ which maps $ W_{f} $ onto $  W_{\pi \cdot f} $. It follows that for $ f, g \in [1/2, 1]^{[\mathbb{N}]^{2}} $, we have
$$ \varrho_{Iso_{L}(\mathbb{G}), \tilde{\varrho}_{H}}(W_{f}, W_{g}) \leq 2 \delta \cdot \varrho_{S_{\infty}, \sigma}(f, g). $$
By Lemma~\ref{lemmacdk}, 
$$ \varrho_{S_{\infty}, \sigma}(f, g) \leq C \varrho_{K}(W_{f}, W_{g}), $$
and it is clear that
$$ \varrho_{K}(W_{f}, W_{g}) \leq \varrho_{Iso_{L}(\mathbb{G}), \tilde{\varrho}_{H}}(W_{f}, W_{g}). $$
To show that $ f \mapsto W_{f} $ is the desired reduction, it remains to note that it is a Borel mapping, which is a consequence of the continuity of the mappings $ f \mapsto Qe_{f, k} $.

\end{document}